\setlist{itemsep=3pt,parsep=1pt,topsep=3pt,partopsep=1pt}
\setlist[enumerate,1]{label=(\arabic*)}
\setlist[enumerate,2]{label=(\alph*)}
\theoremstyle{plain}% default
\newtheorem{theorem}{Theorem}[section]
\newtheorem{lemma}[theorem]{Lemma}
\newtheorem{corollary}[theorem]{Corollary}
\newtheorem*{theorem*}{Theorem}
\newtheorem*{corollary*}{Corollary}
\theoremstyle{definition}
\newtheorem{definition}[theorem]{Definition}
\newtheorem{miniremark}[theorem]{}
\newtheorem*{notation*}{Notation}
\theoremstyle{remark}
\newtheoremstyle{newremark}  % name
{}                           % Space above
{}                           % Space below
{}                           % Body font
{}                           % Indent amount
{}                           % Theorem head font
{.}                          % Punctuation after theorem head
{ }                          % Space after theorem head
{{\bfseries \thmnumber{#2}}{\itshape \thmname{ #1}}\thmnote{ (#3)}} % Theorem head spec
\theoremstyle{newremark}
\newtheorem{remark}[theorem]{Remark}
\newcommand{\End}[1]{ \mathrm{End}({#1}) }
\newcommand{\ccspace}[1]{\mathscr{K}(#1)}
\newcommand{\grass}[2]{\mathbf{G}(#1,#2)}
\newcommand{\Var}[1]{\mathbf{V}_{#1}}     % general varifolds
\newcommand{\IVar}[1]{\mathbf{IV}_{#1}}   % integral varifolds
\newcommand{\var}[1]{\mathbf{v}_{#1}}     % varifolds generated by rectifiable sets
\DeclareMathOperator{\graph}{graph}     % the graph of a function
\newcommand{\adim}{n}
\newcommand{\vdim}{k}
\newcommand{\codim}{n-k}
\newcommand{\oball}[2]{\mathbf{U}(#1,#2)}
\newcommand{\sphere}[1]{\mathbb{S}^{#1}}
    \renewcommand{\textint}[2]{{\textstyle\int_{#1}^{#2}}}
    \newcommand{\textint}[2]{{\textstyle\int_{#1}^{#2}}}
    \renewcommand{\textfint}[2]{{\textstyle\fint_{#1}^{#2}}}
    \newcommand{\textfint}[2]{{\textstyle\fint_{#1}^{#2}}}
    \renewcommand{\textsum}[2]{{\textstyle\sum_{#1}^{#2}}}
    \newcommand{\textsum}[2]{{\textstyle\sum_{#1}^{#2}}}
  \renewcommand{\textprod}[2]{{\textstyle\prod_{#1}^{#2}}}
  \newcommand{\textprod}[2]{{\textstyle\prod_{#1}^{#2}}}
\newcommand{\natp}{\mathscr{P}}
\newcommand{\nat}{\natp \cup \{0\}}
\newcommand{\integers}{\mathbf{Z}}
\newcommand{\R}{\mathbf{R}}
\newcommand{\CF}[1]{ \raisebox{\depth}{$\boldsymbol{\chi}$}_{#1} }
\newcommand{\LM}{\mathscr{L}}
\newcommand{\Lp}[1]{\mathbf{L}_{#1}}
\newcommand{\npp}[2]{\boldsymbol{\xi}_{{#1}}^{{#2}}}
\newcommand{\distF}[2]{\boldsymbol{\delta}_{\!#2}^{#1}}
\newcommand{\da}[2]{\distF{#2}{#1}}
\newcommand{\an}[2]{\boldsymbol{\nu}_{\!#1}^{#2}}
\newcommand{\ar}[2]{\boldsymbol{r}_{\!#1}^{#2}}
\newcommand{\pp}{\mathbf{p}}
\newcommand{\qq}{\mathbf{q}}
\DeclareMathOperator{\Unp}{Unp}  
\newcommand{\mcv}[1]{ \mathbf{h}_{{#1}} }
\newcommand{\GM}{\mathscr{G}}
\newcommand{\HM}{\mathscr{H}}
\newcommand{\density}{\boldsymbol{\Theta}}
\newcommand{\unitmeasure}[1]{\boldsymbol{\alpha}(#1)}
\newcommand{\restrict}{ \mathop{ \rule[1pt]{.5pt}{6pt} \rule[1pt]{4pt}{0.5pt} }\nolimits }
\newcommand{\ud}{\ensuremath{\,\mathrm{d}}}
\newcommand{\uD}{\ensuremath{\mathrm{D}}}
\newcommand{\Der}{\uD}
\DeclareRobustCommand{\rchi}{{\mathpalette\irchi\relax}}
\newcommand{\irchi}[2]{\raisebox{\depth}{$#1\chi$}}
\newcommand{\arl}[2]{\underline{\ar{#1}{#2}}}
\DeclareMathOperator{\cato}{cato}
\newcommand{\id}[1]{\bm{1}_{#1}}
\newcommand{\lIm}{ [ }
\newcommand{\rIm}{ ] }
\newcommand{\tbcup}{{{\textstyle \bigcup}}}
\newcommand{\Clos}[1]{\mathop{\mathrm{Clos}}#1}
\newcommand{\cylind}[4]{\mathbf{C} ( #1, #2, #3, #4 )}
\newcommand{\VF}{\mathscr{X}}
\DeclareMathOperator{\trace}{trace}
\DeclareMathOperator{\Hom}{Hom}
\newcommand{\Bdry}{\partial}
\newcommand{\redbd}{\partial^{\ast}}
\DeclareMathOperator{\ap}{ap}
\DeclareMathOperator{\lin}{span}
\newcommand{\cnt}[1]{\mathscr{C}^{#1}}
\newcommand{\orthgroup}[1]{\mathbf{O}({#1})}
\DeclareMathOperator{\Int}{Int}
\DeclareMathOperator{\dmn}{dmn}
\newcommand{\grad}{\nabla}
\DeclareMathOperator{\spt}{spt}
\DeclareMathOperator{\Tan}{Tan}
\newcommand{\without}{\!\mathop{\smallsetminus}}
\DeclareMathOperator{\im}{im}
\renewcommand{\adim}{n+1}
\renewcommand{\vdim}{n}
\renewcommand{\codim}{1}
\newcommand*{\nsubsection}[1]{%
  \subsection*{#1}%
  \addcontentsline{toc}{subsection}{#1}
  \NR@gettitle{#1}%
}
\let\c@equation\c@enumi
\title{Quadratic flatness and Regularity\\
  for Codimension-One Varifolds \\
  with Bounded Anisotropic First Variation}
\author{Sławomir Kolasiński \and Mario Santilli}
\begin{document}

\maketitle
%\tableofcontents

\begin{abstract}
    We prove that if $ V $ is a $ n $-dimensional varifold in an open subset of $ \R^{n+1} $ with
    bounded anisotropic mean curvature such that $ \spt \| V \| $ has locally finite
    $ \HM^n $-measure, then $ \spt \| V \| $ can be touched by two mutually tangent balls at
    $ \HM^n $ almost all points. In particular, this result implies that $ \HM^n $ almost all of
    $ \spt \| V \| $ can be covered by the union of countably many $ \cnt{2} $-regular
    $ n $-dimensional submanifolds of $ \R^{n+1} $. Moreover, combined with Allard's local
    anisotropic regularity theorem, it implies that if $ V $ is an \emph{integral} varifold with
    bounded anisotropic mean curvature and if $ \HM^n \restrict \spt \| V \| $ is absolutely
    continuous with respect to $ \| V \| $, then $ \spt \| V \| $ is $ \cnt{1, \alpha} $-regular
    around $ \HM^n $ almost every point of density $ 1 $.
\end{abstract}

\section{Introduction}
\label{sec:intro}

In the renowned paper \cite{Allard1972}, Allard establishes several fundamental results for
varifolds whose Euclidean (isotropic) first variation is controlled. The pillar of this theory is
the \emph{monotonicity formula}; cf.~\cite[5.1]{Allard1972} or~\cite{Simon83}. If $ V $ is a
$ k $-dimensional rectifiable varifold in~$ \R^{\adim} $ with mean curvature in $ \Lp{p}(\| V \|) $
and $ p > k $, one of Allard's main achievements is contained in the celebrated regularity theorem
\cite[8.1]{Allard1972} stating that $ \spt \| V \| $ is locally a~graph of a~$ \cnt{1} $-function
around each point of density close to $ 1 $. Another classical and central consequence of the
monotonicity formula states that $ \spt \| V \| $ must be
countably $(\HM^{k},k)$-rectifiable; cf.~\cite[8.6]{Allard1972}. This conclusion has considerably
been strengthened in recent years.  For codimension-one integral varifolds $ V $ with Euclidean mean
curvature in $ \Lp{p}(\| V \|) $, and $p > \vdim$, Sch\"atzle in \cite{Schatzle04} obtained
a~quadratic-flatness result asserting that for $ \| V \| $ almost~all\ $ a $ there exist
$ \nu \in \sphere{\vdim} $ and $ r > 0 $ such that
\begin{displaymath}
        \oball{a \pm r\nu}{r} \cap \spt \| V \|
        = \varnothing \,,
\end{displaymath}
where $\oball{a}{r}$ is the open ball centred at~$a$ of radius~$r$. In particular,
quadratic-flatness implies that $ \spt \| V \| $ can be covered, up to a set of $ \HM^{\vdim} $-measure
zero, by the union of countably many $ \cnt{2} $-regular $ \vdim $-dimensional submanifolds of $ \R^{\adim} $. 
The~$ \cnt{2} $-rec\-ti\-fi\-a\-bi\-li\-ty and quadratic flatness were later extended to \emph{integral} varifolds of locally bounded first variation by Menne (cf.\ \cite{Menne13}  and \cite[Theorem 19.11]{menne2024sharplowerboundmean}). The $ \cnt{2} $-rectifiability was further generalized by the second author in \cite{Santilli21} to
arbitrary codimension varifolds  with bounded Euclidean mean curvature satisfying a~uniform lower bound on the density. We mention that $\cnt{2}$-rectifiability is sharp for varifolds
with \emph{bounded} mean curvature, while for \emph{stationary} varifolds De Lellis, Brena and
Franceschini~\cite{Brena2025} have recently proved $\cnt{\infty}$-rectifiability.

The extension of these results, in particular the extension of the regularity theory, to integral
varifolds whose first variation with respect to a~\emph{parametric elliptic integrand} is controlled
is a well~known and outstanding problem in the calculus of variations. The main obstacle arises from
the fact that no monotonicity formula is available for such varifolds; cf.~\cite[4.9]{Allard1972}
and~\cite{Allard1974}. Henceforth, most~of the powerful techniques from~\cite{Allard1972} are not
accessible in this anisotropic setting.

Significant progress was made after the groundbreaking result of De~Philippis, De~Rosa, and
Ghiraldin~\cite{Philippis2018}, where the authors introduce a~new notion of ellipticity, called
the~\emph{atomic condition} or AC for short, and prove it is necessary and sufficient to ensure that
varifolds whose anisotropic total variation measure is Radon are rectifiable -- an analogue of~\cite[5.5(1)]{Allard1972}. Later it was shown in~\cite{Rosa2020} that
this new notion is stronger than the classical Almgren's ellipticity introduced
in~\cite{Almgren1968}. The~atomic condition was also exploited by De~Rosa and
Tione~\cite{DeRosaTione2022} who introduced its uniform version and used it to show almost
everywhere regularity for Lipschitz graphs of arbitrary codimension with anisotropic mean curvature
in~$\Lp{p}(\|V\|)$ given $p > k$. In the recent work of De~Rosa, Lei, and~Young~\cite{DeRosa2025}
the connection with polyconvexity is established. In codimension-one an integrand~$F$ satisfies
the~atomic condition if and only if the associated norm~$\phi$ is strictly convex;
cf.~\cite[Theorem~1.3]{Philippis2018}.  Recently, De~Philippis, De~Rosa and Li have extended the
min-max methods to the anisotropic setting in~\cite{DePhilippis2024a} (see
also~\cite{DePhilippis2024}), hence providing the existence of hypersurfaces $ \Sigma $ in any
$(\adim)$-dimensional smooth closed Riemannian manifold which are $ F $-stationary (or, more
generally, have constant mean $ F $-curvature) with respect to a uniformly elliptic integrand $ F $,
and with singular set of vanishing $\HM^{\vdim-2}$-measure.

In this paper we study the rectifiability and the regularity properties of the support of
codimension-one varifolds $ V $ on an open subset $ \Omega $ of $ \R^{\adim} $, such that the
anisotropic mean curvature with respect to an arbitrary  uniformly convex norm~$\phi$ is bounded
in~$\Lp{\infty} $. In this setting a fundamental insight is obtained by Allard in
\cite[3.6]{Allard1986} where, employing a~barrier principle, he proved that if $ V $ satisfies the
\emph{absolute continuity hypothesis}
\begin{gather}
    \label{absolute_continuity_hp}
    \text{$ \HM^{\vdim} \restrict \spt \| V \| $ is absolutely continuous with respect to $ \| V \| $} \,,
\end{gather}
and if $ a \in \spt \| V \| $ is a unit-density point that satisfies certain smallness
  assumptions  on the $ \Lp{2} $ and
$ \Lp{\infty} $-height excess inside \emph{unbounded} cylinders in $ \Omega $ (cf.\ \cite[2.2(1)-(8), 3.6(1)-(5)]{Allard1986}), then $ a $ is a
$ \cnt{1}$-regular point of $ \spt \| V \| $ (i.e.\ $ \spt \| V \| $ is a graph of a
$ \cnt{1} $-function locally around $ a $).  This is an~extension of the well known
theorem~\cite[8.16]{Allard1972}, which holds for varifolds whose Euclidean first variation is
controlled. On the other hand, this result does not guarantee the existence of a~$ \cnt{1} $-regular
part for~$V$, as one needs to verify the required smallness assumptions at $ \| V \| $-almost all
unit-density points of $ \spt \| V \| $. For the euclidean area integrand, this issue is resolved by
a (first-order) flatness result whose proof employs again the monotonicity formula;
cf.~\cite[8.8]{Allard1972}. An analogous result in the anisotropic setting has been missing so far,
since the approach of \cite{Allard1972} clearly cannot be employed.  This has remained an open problem that we resolve in this paper. Indeed, one of our main results is a general quadratic flatness theorem for
varifolds of bounded mean $ \phi $-curvature that can be combined with Allard's theorem \cite[3.6]{Allard1986} to settle the aforementioned
regularity question. Before stating the result, we introduce some notation and recall a~few basic
facts.

If $ A \subseteq \R^{\adim} $ then we denote by $ \da{A}{} $ the distance function (taken with
respect to the Euclidean metric) from $ A $ and we define the \emph{proximal unit-normal bundle}
\begin{displaymath}
    N(A) = \bigl( \Clos{A} \times \sphere{\vdim} \bigr) \cap \bigl\{
    (a, \eta) : \da{A}{}(a + r \eta) = r \; \text{for some $ r > 0 $}
    \bigr\} \,,
\end{displaymath}
whose fiber at $ a $ is defined as $ N(A,a) = \sphere{\vdim} \cap \{\eta: (a, \eta) \in N(A)\}. $ It
is known that $ N(A) \subseteq \Clos{A} \times \sphere{\vdim} $ is countably $\vdim$~rectifiable in
the sense of~\cite[3.2.14]{Federer1969} and Borel; moreover, $ \pp \lIm N(A) \rIm $ is countably
$(\HM^{\vdim},\vdim)$~rectifiable of class $\cnt{2}$; cf.\
\ref{rem:phi_normal_bundle_in_terms_of_Euclidean_one}. Neither $ N(A) $ nor $ \pp[N(A)]$ have, in
general, locally finite $ \HM^{\vdim} $-measure (cf.\ \ref{accumulating parallel lines} and
\cite[Lemma A.3]{Santilli_Valentini_2025}).

\begin{theorem}[Quadratic flatness, \protect{cf.~\ref{main_rectifiability proof}}]
    \label{main:rectifiability}
    Suppose 
    \begin{gather}
        \text{$ \phi $ is a uniformly convex $ \cnt{2} $-norm of $ \R^{\adim} $} \,,
        \\
        \text{$ \Omega \subseteq \R^{\adim} $ is open} \,,
        \quad
        V \in \Var{\vdim}(\Omega)  \,,
        % \quad
        % A = \spt \| V \| \,,
        \\
        \text{$ 0 \leq  H < \infty $ is such that $ \| \delta_\phi V \| \leq H \| V \| $} \,,
        \\
        \text{$\LM^{\adim}\bigl(\spt \| V \|\bigr) =0 $
          and $ \HM^{\vdim} \restrict \pp[N(\spt \| V \|)] $
          is a Radon measure over $ \Omega $} \,,
        \label{zero volume hypothesis}
        \\
        S = \spt \| V \| \cap \big\{
        a : \exists \, \nu \in \sphere{\vdim} \, \text{s.t.} \;
        N(\spt \| V \|,a) = \{\pm \nu\}\big\} \,.
    \end{gather}
    
    Then $ \HM^{\vdim} \restrict \spt \| V \| $ is a Radon measure over $ \Omega $ and
    $ \HM^{\vdim}\bigl( \spt \| V \| \without S \bigr) =0 $. In particular, $ \spt \| V \| $ is
    $(\HM^{\vdim}, \vdim) $-rectifiable of class $ \cnt{2} $.
\end{theorem} 

As a consequence of the theorem, we notice that \eqref{zero volume hypothesis}  is \emph{equivalent} to the seemingly stronger condition \begin{equation}
 \textrm{$\HM^{\vdim} \restrict \spt \| V \| $ is a Radon measure over $ \Omega $.}
\end{equation}  
This hypothesis  is necessary to conclude that
$ \HM^{\vdim}\bigl( \spt \| V \| \without S \bigr) =0 $, as shown by the simple example in
\ref{accumulating parallel lines}.  We remark that even first order rectifiability of~$\spt \|V\|$
 is new under the hypothesis of~\ref{main:rectifiability}.

In \cite{Schatzle04} quadratic flatness is proved combining results from \cite{Allard1972} and
\cite{Brakke1978} with non-variational techniques based on maximum principles developed in the
context of viscosity solutions of second order elliptic PDE's (\cite{Trudinger89} and
\cite{Caffarelli1989}).  We refer to \cite{Almgren1986}, \cite{SolomonWhite} and \cite{Allard1986}
for the introduction of maximum principle methods in varifold's theory. Later, these methods were
systematically investigated in \cite{White2010,White2016} for the area integrand and varifolds of
arbitrary codimension, and in~\cite{DePhilippis2019} for codimension-one varifolds and elliptic
integrands arising from uniformly convex norms. More specifically, these works introduce the notion
of~$(\vdim,h)$~sets (cf.~\ref{def:n_h_sets} and~\ref{lem:support_of_a_varifold_is_nh_set}), and they
establish the connection with varifold theory by proving that the support of a varifold with bounded
mean curvature must be an $(\vdim,h)$~set (cf.\ \cite[Theorem~2.8]{White2016} and
\cite[Lemma~4.11]{DRKS2020ARMA}). See also \cite{SavinMinimalsurfaces}, \cite{Santilli20b},
\cite{DRKS2020ARMA}, \cite{Santilli21}, and \cite{DePhilippisNewproofAllard}. Since a proof of
Theorem~\ref{main:rectifiability} cannot rely on the variational machinery developed in
\cite{Allard1972} and \cite{Brakke1978} for the area integrand, we develop a completely different
approach that combines maximum-principle techniques (cf.\ \ref{lem:weak_maximum_principle}) with the
theory of curvature for arbitrary closed sets (cf.\ \cite{HugSantilli} and
\cite{Santilli20b}). Indeed, we are able to obtain the result in the general setting of
$(\vdim,h) $-sets; see Theorem \ref{lem:minkowski_content_of_nh_sets} and Remark
\ref{main_rectifiability proof}.

Employing Theorem \ref{main:rectifiability} we can now verify the hypotheses of Allard's
local regularity theorem \cite[3.6]{Allard1986} at  $ \HM^{\vdim} $ a.e.\
$ a \in \spt \| V \| $ where $ \density^{\vdim}(\| V \|, a) = 1 $. 

\begin{theorem}
    \label{main:regularity}
    Suppose 
    \begin{gather}
        \text{$ \phi $ is a uniformly convex $ \cnt{3} $-norm on $ \R^{\adim} $} \,, 0 < \alpha < 1 \,,\\
        \text{$\Omega \subseteq \R^{\adim} $ is open, $V $ is a $ \vdim $-dimensional integral varifold in $ \Omega $} \,,\\
        \text{$ 0 \leq  H < \infty $ is such that $ \| \delta_\phi V \| \leq H \| V \| $} \,,\\
        \text{$ \HM^{\vdim} \restrict \spt \| V \| $ is absolutely continuous with respect to $ \| V \| $} \,, \\ 
         Q = \spt \| V \| \cap \bigl\{ a  : \density^{\vdim}(\| V \|, a) = 1 \bigr\} \,.
     %   \text{and $ \density^{\vdim}(\| V \|, x) = 1 $ \  for $ \| V \| $ almost~all\ $ x \in \Omega $}  \,.
    \end{gather}
    
        For $ \HM^{\vdim}$ almost all $ a \in Q $ there exists $ r > 0 $ such that
        \begin{displaymath}
            \oball ar \cap Q = \oball ar \cap \spt \| V \|
            \quad \text{is a~$ \cnt{1, \alpha} $-hypersurface} \,.
        \end{displaymath}
        \end{theorem}

Theorems \ref{main:rectifiability} and \ref{main:regularity} can be applied to study the regularity of sets of finite
perimeter, whose first variation of the anisotropic $ \phi $-perimeter is controlled.  For every set
of finite perimeter $ E \subseteq \R^{\adim}$ the $ \phi $-perimeter of $ E $ is
defined by
\begin{displaymath}
    \mathcal{P}_{\phi}(E) = \textint{\redbd E}{}\, \phi(\nu_E(x)) \ud \HM^{\vdim}(x) \,.
\end{displaymath}
Here we denote by $ \redbd E $ and $ \nu_E $ the reduced boundary and the measure-theoretic
exterior normal of $ E $; cf.~\cite{AmbrosioFuscoPallara}. If $ E \subseteq \R^{\adim} $ is a set of
finite perimeter and $ g \in \VF(\R^{\adim}) $ is a~compactly supported $ \cnt{1} $-vectorfield, then \emph{the
  first variation of $ \mathcal{P}_{\phi} $ at $ E $ in the direction $ g $} is defined by
\begin{displaymath}
    \delta \mathcal{P}_{\phi}(E)(g) = \left. \tfrac{d}{dt} \mathcal{P}_{\phi}(\varphi_t \lIm E \rIm) \right|_{t=0} \,,
\end{displaymath}
where $ \varphi : \R^{\adim}\times \R \rightarrow \R^{\adim} $ is the flow of $ g $. We denote by
$ \| \delta \mathcal{P}_{\phi}(E) \| $ the total variation measure associated with
$\delta \mathcal{P}_{\phi}(E) $. Notice that
$ \| \delta \mathcal{P}_{\phi}(E) \| = \| \delta_\phi \var{\vdim}(\redbd E) \| $, where
$ \var{\vdim}(\redbd E) $ is the unit-density varifold associated with the
rectifiable set $  \redbd E $. The following regularity result follows from \cite[Lemma 6.2]{DRKS2020ARMA} and  Theorem~\ref{main:regularity}.
 
\begin{corollary}\label{thm regularity sets of finite perimeter}
    Suppose $ 0 < \alpha < 1 $, $ 0 \leq \kappa < \infty $ and $ E \subseteq \Omega $ is a set of finite perimeter  such that  $ \HM^{\vdim}\bigl(\Clos{\redbd E} \without \redbd E\bigr) = 0$ and
$$ \| \delta \mathcal{P}_{\phi}(E) \| \leq \kappa \, \bigl( \HM^{\vdim} \restrict \redbd E \bigr). $$

   	Then there exists an open subset $ \Omega \subseteq \R^{n+1} $ and a $ \cnt{1, \alpha} $-hypersurface $ M \subseteq \redbd \Omega $ such that the following statements hold.
   \begin{enumerate}
   	\item\label{regularity theorem 1} $ \LM^{\adim}(\Omega \without E) = \LM^{\adim}(E \without \Omega) =0 $, $ \HM^n(\Bdry \Omega \setminus \redbd \Omega) =0 $, $ \spt (\HM^n \restrict \redbd \Omega) = \Bdry \Omega $.
   	\item\label{regularity theorem 2}	$ \HM^{\vdim}(\Bdry \Omega \without M) = 0$.
   	\item\label{regularity theorem 5}  There exists a countable family of $ \cnt{2} $-hypersurfaces of $ \R^{n+1}  $ that covers $ \HM^n $ almost all $ \partial \Omega $.
   	\item $ N(\partial \Omega,a) = \{\pm \an{\Omega}{}(a)\} $ for $ \HM^n $ a.e.\ $  a\in \partial^\ast \Omega $.
   \end{enumerate}
     \end{corollary}

 \begin{remark}
     \label{isoperimetric}
     The proof of~\cite[Theorem~6.5]{DRKS2020ARMA} employs almost
     everywhere regularity of sets of finite perimeter satisfying the hypothesis of
     Corollary~\ref{thm regularity sets of finite perimeter}. The authors erroneously believed this claim to be an immediate consequence of Allard's local anisotropic regularity theorem \cite[3.6]{Allard1986} (cf.\ page 1189 of
     \cite{DRKS2020ARMA}). 
\end{remark}

\section{Preliminaries}
\label{sec:prelim}

\nsubsection{Notation}
\label{sec:notation}

\begin{miniremark}
    \label{mr:basic_notation}
  %  In~principle, but with some exceptions, we shall follow the notation of Federer;
%    see~\cite[pp.~669--671]{Federer1969}. For varifolds we follow the notation of \cite{Allard1972}.
    
    We write $\natp$ for the set of positive integers. In~case
    $S \subseteq \R^{\adim}$ the \emph{closure} in~$\R^{\adim}$ of~$S$ is $\Clos{S}$, the
    \emph{interior} is denoted $\Int S$, and its \emph{topological boundary} in~$\R^{\adim}$
    is~$\Bdry S$. The~\emph{characteristic function} of $S \subseteq \R^{\adim}$ is
    $\CF{S} : \R^{\adim} \to \{0,1\}$. Given any set $X$ the \emph{identity map} on~$X$ is the
    function $\id{X} : X \to X$.  We~write  $\oball ar$ for the
    \emph{open ball} of radius $0 < r < \infty$ and centre~$a$ (in the metric space that~$a$ belongs
    to, which should be clear from the context).   Whenever $S \subseteq \R^{\adim}$ and $a \in \R^{\adim}$ the \emph{tangent
    	cone} $\Tan(S,a) \subseteq \R^{\adim}$ is defined as
    in~\cite[3.1.21]{Federer1969}.
    
    The \emph{scalar product} of $u,v \in \R^{\adim}$ is written as $u \bullet v$ and $|\cdot|$ is
    the associated \emph{Euclidean norm}. We define the standard \emph{polarity}
    $\beta : \R^{\adim} \to \Hom(\R^{\adim},\R)$ by $ \beta(u)v = u \bullet v $.  Note that given
    $\nu \in \sphere{\vdim}$ the hyperplane orthogonal to~$\nu$ is
    $ \ker \beta(\nu) = \lin \{ \nu \}^{\perp} $. The \emph{operator norm} with respect to
    $ | \cdot | $ is denoted by~$ \| \cdot \| $. We~set
    $\sphere{\vdim} = \R^{\adim} \cap \{ x : |x| = 1 \} $. If $ A \subseteq \R^{\adim} $ we define
    the \emph{distance function} by
    \begin{displaymath}
        \da{A}{}(x) = \inf \bigl\{ | x-a | : a \in A \bigr\}
        \qquad \text{for $ x \in \R^{\adim} $} \,.
    \end{displaymath}
    and the \emph{equidistant set} by
    $ S(A,r) = \R^{\adim} \cap \bigl\{ x : \da{A}{}(x) = r \bigr\} $ for $ r > 0 $.
    
    For $k,m \in \natp$ with $0 \le k \le m$ we~denote by $ \grass mk $ the \emph{Grassmannian} of
    $ k $-dimensional planes in $ \R^m $. If $ T \in \grass mk $ then we use the same symbol $ T $
    to denote the orthogonal projection $ T : \R^m \rightarrow \R^m $ onto $ T $. If
    $T \in \grass mk$, we set $T^{\perp} = \id{\R^{m}} - T \in \grass{m}{m-k} $, the orthogonal
    complement of $ T $.
\end{miniremark}

\begin{miniremark}
    As in~\cite[p.~669]{Federer1969} given a~relation $ Q \subseteq \R^{\adim} \times \R^{\adim}$
    and $ S \subseteq \R^{\adim}$ the \emph{restriction of $Q$ to $S$} is
    \begin{displaymath}
        Q | S = Q \cap \bigl\{ (x,y) : x \in S \bigr\} \,.
    \end{displaymath}
\end{miniremark}

\begin{miniremark}\label{projections}
    The \emph{canonical projections} for the product
    $\R^{\adim} \times \R^{\adim}$ are given by
    \begin{gather}
        \pp  : \R^{\adim} \times \R^{\adim} \rightarrow \R^{\adim}
        \quad \text{and} \quad
        \qq  : \R^{\adim} \times \R^{\adim} \rightarrow \R^{\adim}
        \\
        \text{so that} \quad
        \pp(a, \eta) = a
        \quad \text{and} \quad
        \qq(a, \eta) = \eta
        \quad \text{for $a,\eta \in \R^{\adim}$} \,.
    \end{gather}
\end{miniremark}

\begin{miniremark}
    \label{def:cylinder}
    Given a linear projection $T : \R^{m} \to \R^{m}$ and $r,s \in \{ t : 0 < t \le \infty \} $
    we~define the \emph{open cylinder}
    \begin{displaymath}
        \cylind Tars =  \R^{m} \cap \bigl\{
        z : |(\id{\R^{m}} - T)(z-a)| < s ,\; |T(z-a)| < r
        \bigr\}  \,.
    \end{displaymath}
\end{miniremark}

\begin{miniremark}
    If $\nu \in \sphere{\vdim}$, $A \subseteq \R^{\adim}$, and $ f : A \to \R $ we set
    \begin{gather}
        \graph_{\nu}(f) = \R^{\adim} \cap \bigl\{ \sigma + f(\sigma) \nu : \sigma \in A  \bigr\} 
        \\
        \text{and} \quad
        \cato_{\nu}(f) = \R^{\adim} \cap  \bigl\{ \sigma + t \nu : \sigma \in A ,\, t < f(\sigma) \bigr\} \,.
    \end{gather}
    Clearly the definitions of $\graph_{\nu}(f)$ and $\cato_{\nu}(f)$ make most sense
    if $\nu \notin \lin A$.
\end{miniremark}

\begin{miniremark}
    In the whole paper we assume $\phi$~is a~uniformly convex~$\cnt{2}$-norm
    on~$ \R^{\adim}$. Uniform convexity implies that there exists
    an~\emph{ellipticity constant} $ \gamma(\phi) > 0 $ such that
    \begin{displaymath}
        \Der^2 \phi(u)(v,v) \ge \gamma(\phi) |v|^2
        \quad \text{for $ u \in \sphere{\vdim}$ and $ v \in \lin \{ u \}^{\perp} $} \,.
    \end{displaymath}
    For $l \in \nat$ we also define 
    \begin{gather}
        c_{l}(\phi) = \sup \big\{ \|\Der^k \phi(\nu) \| : \nu \in \sphere{\vdim} ,\; k \in \integers ,\; 0 \le k \le l \big\} \,.
    \end{gather}
    Note that since $\phi$ is $\cnt{2}$-regular away from the origin the constant~$\gamma(\phi)$
    coincides with the constant named ``$\gamma$'' in~\cite[3.1(4)]{Allard1986}.
\end{miniremark}

\begin{miniremark}[\protect{cf.~\cite[3.2.14 and 3.2.29]{Federer1969}}]
    \label{def:rectifiability_of_higher_class}
    Let $k,l \in \natp$ and $k \le \adim$. A set $\Sigma \subseteq \R^{\adim}$ is called
    \emph{countably $(\HM^k,k)$~rectifiable of class $\cnt{l}$} if there exists a~countable family
    $\mathcal{A}$ of~submanifolds of~$\R^{\adim}$ of~dimension~$k$ and class~$\cnt{l}$ such that
    $\HM^k(\Sigma \without \tbcup \mathcal{A}) = 0$. If, additionally, $\HM^{k}(\Sigma) < \infty$,
    we say that $\Sigma$ is~\emph{$(\HM^k,k)$~rectifiable of class $\cnt{l}$}. In~case $l=1$, we
    omit ``of class $\cnt{l}$''.
\end{miniremark}

\subsection*{Anisotropic first variation and barrier principle}  
\addcontentsline{toc}{subsection}{Anisotropic first variation and barrier principle}

\begin{miniremark}
	A $ \vdim $-dimensional varifold $ V $ in an open subset $ U $ of $ \R^{\adim} $ is a Radon
	measure over $ U \times \grass{\adim}{\vdim} $, which is often identified with a~positive linear
	functional on $\ccspace{U \times \grass{\adim}{\vdim}}$ -- the space of continuous compactly
	supported real-valued functions on $U \times \grass{\adim}{\codim}$; cf.~\cite[2.5.14,
	2.5.19]{Federer1969}.  The~space of all $\vdim$-dimensional varifolds in~$\Omega$ is denoted
	$\Var{n}(\Omega)$. A $ \vdim $-dimensional varifold $ V $ on $ U $ is called \emph{integral} if
	there exist a countably $ (\HM^{\vdim}, \vdim) $-rectifiable set $ M \subseteq U $ and
	a~$\natp$-valued locally $ \HM^{\vdim} \restrict M $-summable function $ \theta $ such that
	\begin{displaymath}
		V(\alpha) =  \textint{M}{} \theta(x)\, \alpha(x, \Tan^{\vdim}(\HM^{\vdim} \restrict M, x))\ud\HM^{\vdim}(x)
	\end{displaymath}
	for $ \alpha \in \ccspace{U \times \grass{\adim}{\vdim}} $. In case $\theta(x) = 1$ for
	$\HM^{\vdim}$~almost all $x \in M$ we write $\var{\vdim}(M)$ for the corresponding varifold. The
	space of all $\vdim$-dimensional integral varifolds in~$\Omega$ is denoted by
	$\IVar{\vdim}(\Omega)$
	
	Suppose $ \phi $ is a $ C^1 $-norm on $ \R^{\adim} $.  For $ \nu \in \sphere{\vdim} $, we define
	$ B_\phi(\nu) \in {\rm Hom}\bigl(\R^{\adim},\R^{\adim}\bigr) $ by the formula
	\begin{displaymath}
		\langle v, B_\phi(\nu) \rangle
		= \phi(\nu) v - \langle v, \Der \phi(\nu) \rangle \, \nu
		\quad \textrm{for $ v \in \R^{\adim} $.}
	\end{displaymath}  
	Notice that $ B_\phi(-\nu) = B_\phi(\nu) $ for each $ u \in \sphere{\vdim} $. For
	$V \in \Var{\vdim}(\Omega)$ we define the $ \phi $-weighted varifold
	$V_\phi \in \Var{k}(\Omega)$ by
	\begin{displaymath}
		V_\phi(\alpha) = \textint{}{} \alpha(x,S) \phi(\nu(S)) \ud V(x,S)
		\quad \text{for $\alpha \in \ccspace{\Omega \times \grass{\adim}{k}}$} \,,
	\end{displaymath}
	and the first $ \phi $-variation of $ V $ in $ U $ by
	\begin{displaymath}
		\delta_\phi V (g) =  \textint{}{} \Der g(x) \bullet B_\phi(\nu(S)) \ud V(x, S)
		\quad \textrm{for $ g \in \VF(U) $,}
	\end{displaymath}
	where $ \nu : \grass{\adim}{\vdim} \rightarrow \sphere{\vdim} $ is a Borel map such that
	$ \nu(S) \in S^\perp \cap \sphere{\vdim} $ for each $ S \in \grass{\adim}{\vdim} $. Notice that
	these definitions do not depend on the choice of~$\nu $.  The total variation
	$ \| \delta_\phi V \| $ of $ \delta_\phi V $ is the largest Borel regular measure over $ U $
	such that
	\begin{displaymath}
		\| \delta_\phi V \|(W) = \sup \bigl\{ \delta_\phi V(g) : g \in \VF(W), \; | g | \leq 1 \bigr\}
	\end{displaymath}
	for each open subset $ W \subseteq U $. If $ \| \delta_\phi V \| $ is a Radon measure over
	$ U $, then by the standard Lebesgue differentiation theory implies the existence of
	a~$ \R^{\adim} $-valued $ \| V \| $-integrable function $ \mcv{\phi}(V, \cdot) $ and of a
	$ \sphere{\vdim} $-valued $ \| \delta_\phi V \| $-integrable function
	$ \boldsymbol{\eta}_\phi(V, \cdot) $ such that
	\begin{displaymath}
		\delta_\phi V (g)
		= \textint{}{} \mcv{\phi}(V, x) \bullet g(x)\ud\| V_\phi \|(x)
		+ \textint{}{} \boldsymbol{\eta}_\phi(V, x) \bullet g(x)\ud\| \delta_\phi V \|_{\mathrm{sing}}(x)
	\end{displaymath}
	for every $ g \in \VF(U) $, where $ \| \delta_\phi V \|_{\mathrm{sing}} $ is the singular part
	of $ \| \delta_\phi V \| $ with respect to~$ \| V \| $.  Notice that if $ V $ is a
	$ \vdim $-dimensional varifold of $ U $ such that there exists $ \kappa \geq 0 $ for which
	\begin{displaymath}
		\| \delta_\phi V \| \leq \kappa \, \| V \|,
	\end{displaymath}
	then $ \| \delta_\phi V \| $ is a Radon measure over $ U $,
	$ \| \delta_\phi V \|_{\rm sing} =0 $ and
	$ \mcv{\phi}(V,\cdot) \in \Lp{\infty}(\| V \|, \R^{\adim}) $.
\end{miniremark}
\begin{miniremark}
    \label{mr:mcv_of_smooth_hypersurfaces}
    Suppose $ N \subseteq \Omega $ is open and such that $ \Omega \cap \Bdry N $
    is a~smooth hypersurface. Let
    $V = \var{\vdim}(\Bdry N) \in \Var{\vdim}(\Omega)$ and
    $ \nu_{N} : \Omega \cap \Bdry N \rightarrow \sphere{\vdim} $ be the
    \emph{inward pointing} unit-normal. Referring
    to~\cite[Proposition~1]{DePhilippis2019} or~\cite[Remark~2.21]{DRKS2020ARMA}
    we get a~formula for the first variation of~$V$ with respect to $\phi$
    \begin{displaymath}
        \delta_{\phi} V(g) = - \int_{\Omega \cap \Bdry N}
        \phi(\nu_{N}(x)) \mcv{\phi}(V,x) \bullet g(x)
        \ud \HM^{\vdim}(x)
        \quad \text{whenever $ g \in \VF(\Omega) $} \,,
    \end{displaymath}
    where
    \begin{displaymath}
        \mcv{\phi}(V,x)
        =  \mcv{\phi}(\Bdry N,x)
        = \frac{-\trace\big(\Der(\grad \phi \circ \nu_{N})(x)\bigr)}{\phi(\nu_{N}(x))} \nu_{N}(x)
        \quad
        \text{for $ x \in \Omega \cap \Bdry N $} \,.
    \end{displaymath}
\end{miniremark}

\nsubsection{A barrier principle}  

\begin{definition}[\protect{cf.\ \cite[Definition 3.1]{DePhilippis2019}}]
    \label{def:n_h_sets}
    Suppose $ \Omega \subseteq \R^{\adim} $ is open, $ 0 \le h < \infty $ and $ A \subseteq \Omega $
    is relatively closed. We say that $ A $ is an \emph{$ (\vdim,h) $~subset of $ \Omega $ with
      respect to $ \phi $} provided the following property holds: if $ N \subseteq \Omega $ is an
    open set such $ \Omega \cap \Bdry N $ is a~smooth hypersurface,
    $ \nu_N : \Omega \cap \Bdry N \rightarrow \sphere{\vdim} $ is the inward pointing unit-normal
    and $ A \subseteq \Clos{N} $ then
    \begin{displaymath}
        \phi(\nu_{N}(x))\mcv{\phi}(\Bdry N,x) \bullet \nu_{N}(x) \le h
        \qquad \textrm{for $ x \in A \cap \Bdry N $} \,.
    \end{displaymath}
\end{definition}

\begin{lemma}[\protect{cf.~\cite[Lemma~4.11]{DRKS2020ARMA}}]
    \label{lem:support_of_a_varifold_is_nh_set}
    Suppose $\Omega \subseteq \R^{\adim} $ is open, $V \in \Var{\vdim}(\Omega)$,
    $0 < h < \infty$, and $\|\delta_{\phi} V\| \le h
    \|V_\phi\|$. Then $ \spt \| V \| $ is an $(\vdim,h)$~subset of
    $\Omega$ with respect to $ \phi $.
\end{lemma}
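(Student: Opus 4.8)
This is the statement of \cite[Lemma~4.11]{DRKS2020ARMA}; the plan is to derive it from the anisotropic barrier principle of~\cite{DePhilippis2019}. Let $N \subseteq \Omega$ be open with $\Omega \cap \Bdry N$ a smooth hypersurface, let $\nu_N : \Omega \cap \Bdry N \to \sphere{\vdim}$ be the inward unit normal, assume $\spt\|V\| \subseteq \Clos N$, and fix $x_0 \in \spt\|V\| \cap \Bdry N$. By Definition~\ref{def:n_h_sets} it suffices to prove $\phi(\nu_N(x_0))\mcv{\phi}(\Bdry N, x_0) \bullet \nu_N(x_0) \le h$. Suppose, for contradiction, that this quantity equals $h + 3\epsilon$ with $\epsilon > 0$.

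The first step is to localise the barrier. Since $\Omega \cap \Bdry N$ is a smooth hypersurface there is a neighbourhood $U$ of $x_0$ on which the signed distance $d$ to $\Bdry N$, taken positive on $N$, is smooth with $|\grad d| \equiv 1$ and with $\grad d$ extending $\nu_N$; each level set $\{d = c\}$ that meets $U$ is then a smooth hypersurface whose weighted anisotropic mean curvature in the direction $\grad d$ equals $-\trace\Der(\grad\phi\circ\grad d)$ on it, in the notation of~\ref{mcv of smooth hypersurfaces}. By continuity of this quantity and the assumption at $x_0$ we may shrink $U$ so that every such level set has weighted anisotropic mean curvature at least $h + 2\epsilon$ throughout $U$. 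Thus, near $x_0$, the measure $\|V\|$ is carried by the closed set $\{d \ge 0\}$, and $\spt\|V\|$ touches, at $x_0$, the leaf $\Bdry N = \{d = 0\}$ of a smooth foliation all of whose leaves are strictly $\phi$-mean-convex with mean curvature bounded below by $h + 2\epsilon > h$. (Equivalently, for each $r > 0$ with $n/r > h$ one may place inside $N$, internally tangent to $\Bdry N$ at $x_0$, a Wulff shape of $\phi$ of radius $r$, whose boundary has inward weighted anisotropic mean curvature exactly $n/r$ by~\ref{mcv of smooth hypersurfaces}.)

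The decisive step is now the barrier principle of~\cite{DePhilippis2019}: if $\|\delta_\phi V\| \le h\|V_\phi\|$, then $\spt\|V\|$ cannot touch from inside a smooth hypersurface whose weighted anisotropic mean curvature, with respect to the unit normal pointing towards the side containing $\spt\|V\|$, strictly exceeds $h$ at the contact point. Applying this with the hypersurface $\Bdry N$ and the contact point $x_0$ contradicts our assumption, and the lemma follows. This barrier principle is the main obstacle and the real content. Heuristically, were $\spt\|V\|$ a smooth hypersurface its mean curvature at the contact point would be at least that of the barrier, which would exceed $h$ and violate $\|\delta_\phi V\| \le h\|V_\phi\|$; but $\spt\|V\|$ is merely closed, and in the anisotropic setting there is no monotonicity formula (cf.~\cite[4.9]{Allard1972}) to substitute for this regularity, so instead one must show that $\spt\|V\|$ obeys this comparison in the viscosity sense — i.e.\ behaves as a supersolution of the anisotropic prescribed-mean-curvature equation. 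This is carried out in~\cite{DePhilippis2019} by PDE comparison techniques in the spirit of Solomon--White~\cite{SolomonWhite} and White~\cite{White2016}. The remaining ingredients used above — localisation of the barrier, continuity of the anisotropic mean curvature of the distance level sets, and the sign conventions of~\ref{mcv of smooth hypersurfaces} — are routine.
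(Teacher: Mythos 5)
Your proposal does not actually prove anything: the ``decisive step'' you invoke from \cite{DePhilippis2019} --- that $\spt\|V\|$, for a varifold with $\|\delta_\phi V\|\le h\|V_\phi\|$, cannot touch from inside a smooth hypersurface whose weighted anisotropic mean curvature exceeds $h$ at the contact point --- is, after your (unneeded) localisation, exactly the statement you are asked to prove: being an $(\vdim,h)$-set is \emph{by definition} the touching-point inequality of Definition~\ref{def:n_h_sets}. So the argument is circular. The foliation by level sets of the signed distance and the internally tangent Wulff shapes do no work, because you never feed them into a first-variation or comparison argument; you simply restate the conclusion and attribute it to the reference.

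The citation is also off target, and this is where the genuine missing idea lies. The results of \cite{DePhilippis2019} are formulated for the \emph{area blow-up set of a sequence} of varifolds with uniformly bounded anisotropic mean curvature: that set is shown to be an $(\vdim,h)$-set. They do not state the touching principle for the support of a single varifold, which is why the paper quotes \cite[Lemma~4.11]{DRKS2020ARMA} rather than \cite{DePhilippis2019} directly. The bridge, which your proposal lacks, is to apply the blow-up theorem of \cite{DePhilippis2019} to the sequence $V_j = jV$: each $V_j$ satisfies $\|\delta_\phi V_j\| \le h\,\|(V_j)_\phi\|$, and since $\|V_j\|(\oball{x}{r}) = j\,\|V\|(\oball{x}{r}) \to \infty$ for every $r>0$ precisely when $x \in \spt\|V\|$, the area blow-up set of $\{V_j\}$ is $\spt\|V\|$; the theorem then yields that $\spt\|V\|$ is an $(\vdim,h)$-set. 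Without this reduction (or an independent proof of the touching principle for $\spt\|V\|$ via comparison arguments in the spirit of Solomon--White, which you gesture at but do not carry out), the proposal has a genuine gap.
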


We need to apply the \emph{barrier principle} with non smooth barriers.
The~following lemma is enough for our purposes.

\begin{lemma}[\protect{cf.\ \cite[Lemma~4.8]{DRKS2020ARMA}}]
    \label{lem:weak_maximum_principle}
    Suppose
    \begin{gather}
        \nu \in \sphere{\vdim}  \,,
        \quad
        T =\ker \beta(\nu) \in \grass{\adim}{\vdim} \,,
        \quad
        0 \in A \subseteq \R^{\adim} \,,
        \\
        g :  T \rightarrow \R  \quad \text{is twice pointwise differentiable at $ 0 $} \,,
        \quad
        g(0) = 0 \,,
        \quad
        \Der g(0) = 0 \,,
        \\
        \tau_1, \ldots, \tau_{\vdim} \quad \text{ is an orthonormal basis of $  T $} \,,
        \quad
        \rchi_1 \le \ldots \le \rchi_{\vdim} \in \R \,,
        \\
        \Der ^2 g(0)(\tau_i, \tau_j)
        = \rchi_i (\tau_i \bullet \tau_j)
        \quad \text{for $ i, j \in \{1, \ldots , \vdim\} $} \,,
        \quad
        \varepsilon, \delta \in \{ t : 0 < t < \infty\} \,,
        \\
        A \cap \cylind{T}{0}{\delta}{\varepsilon}
        \subseteq \Clos{(\cato_{\nu}(g))}  \cap \cylind{T}{0}{\delta}{\varepsilon} \,,
        \\
        \text{and} \quad
        A \cap \cylind{T}{0}{\delta}{\varepsilon}
        \quad \text{is an $ (\vdim,h) $~subset of $ \cylind{T}{0}{\delta}{\varepsilon} $
          with respect to $ \phi $} \,.
    \end{gather}
    Then
    \begin{displaymath}
        \textsum{i=1}{\vdim} \rchi_i\,\Der^2\phi(\nu)(\tau_i, \tau_i)   \le h.
    \end{displaymath} 
\end{lemma}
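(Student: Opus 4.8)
The plan is to invoke the defining property of an $(\vdim,h)$-set for a~smooth barrier built from the osculating paraboloid of~$g$ at~$0$, and to extract the inequality from the mean-curvature formula in~\ref{mcv of smooth hypersurfaces}. Twice pointwise differentiability of~$g$ at~$0$, with $g(0)=0$, $\Der g(0)=0$ and $\Der^2g(0)(\tau_i,\tau_j)=\rchi_i(\tau_i\bullet\tau_j)$, yields the expansion $g(\sigma)=\tfrac12\textsum{i=1}{\vdim}\rchi_i(\sigma\bullet\tau_i)^2+o(|\sigma|^2)$ as $\sigma\to0$ in~$T$. Since $A\cap\cylind{T}{0}{\delta}{\varepsilon}\subseteq\Clos{(\cato_\nu(g))}$ confines~$A$, near~$0$, to lie above the graph of~$g$ in the direction~$\nu$, I fix $\lambda>0$ and set $q_\lambda(\sigma)=\tfrac12\textsum{i=1}{\vdim}(\rchi_i-\lambda)(\sigma\bullet\tau_i)^2$; the expansion produces $0<\varrho<\delta$ with $q_\lambda\le g$ pointwise on $\{\sigma\in T:|\sigma|<\varrho\}$, equality only at~$0$, and with $|q_\lambda|<\varepsilon$ on $\{|\sigma|\le\varrho\}$.

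I then turn~$q_\lambda$ into a~legitimate barrier in the whole cylinder by truncating it against the cylinder walls — this is the main obstacle, since $g$, being merely pointwise twice differentiable at~$0$, may be arbitrarily wild away from~$0$, so~$q_\lambda$ cannot be used as an unbounded competitor. Pick $\psi\in\cnt{\infty}\bigl(T\cap\{|\sigma|<\delta\}\bigr)$ with $\psi=q_\lambda$ on $\{|\sigma|\le\varrho/2\}$, $\psi\le q_\lambda$ on $\{|\sigma|<\varrho\}$, and $\psi\le-\varepsilon-1$ on $\{\varrho\le|\sigma|<\delta\}$ (a~smooth interpolation on $\{\varrho/2<|\sigma|<\varrho\}$ exists). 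Let $N=\cylind{T}{0}{\delta}{\varepsilon}\cap\{z:z\bullet\nu>\psi(T(z))\}$. Then $N$ is open; $\cylind{T}{0}{\delta}{\varepsilon}\cap\Bdry N$ is the graph of~$\psi$ over a~neighbourhood of~$0$ in~$T$, hence a~$\cnt{\infty}$ embedded hypersurface, relatively closed in the cylinder; $0\in A\cap\Bdry N$, because $\psi(0)=q_\lambda(0)=0$; and $A\cap\cylind{T}{0}{\delta}{\varepsilon}\subseteq\Clos N$. For the last point: if $z$ lies in this set and $|T(z)|\ge\varrho$ then $\psi(T(z))\le-\varepsilon-1<z\bullet\nu$, whereas if $|T(z)|<\varrho$ then, as~$A$ lies above~$g$ near~$0$ and $\psi\le q_\lambda\le g$ there, $z\bullet\nu\ge g(T(z))\ge q_\lambda(T(z))\ge\psi(T(z))$ — the continuity of~$q_\lambda$ makes this robust against a~possible discontinuity of~$g$ near~$0$.

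Finally, since $A\cap\cylind{T}{0}{\delta}{\varepsilon}$ is an $(\vdim,h)$-subset of~$\cylind{T}{0}{\delta}{\varepsilon}$ with respect to~$\phi$, \ref{def:n_h_sets} applied to~$N$ at $0\in A\cap\Bdry N$ gives
\[
    \phi(\nu_N(0))\,\mcv{\phi}(\Bdry N,0)\bullet\nu_N(0)\le h,
\]
and by~\ref{mcv of smooth hypersurfaces} and $|\nu_N(0)|=1$ the left-hand side equals $-\trace\bigl(\Der(\grad\phi\circ\nu_N)(0)\bigr)$. Parametrizing $\Bdry N$ near~$0$ by $\sigma\mapsto\sigma+\psi(\sigma)\nu$: since $\Der\psi(0)=0$ we have $\nu_N(0)=\nu$, $T_0\Bdry N=T$, and the inward graph normal $(\nu-\grad\psi(\sigma))\big/(1+|\grad\psi(\sigma)|^2)^{1/2}$ has derivative $\tau\mapsto-\Der^2\psi(0)\tau$ at~$0$ (the denominator contributes nothing because $\grad\psi(0)=0$); hence, by the chain rule, $\Der(\grad\phi\circ\nu_N)(0)\tau=-\Der^2\phi(\nu)\bigl(\Der^2\psi(0)\tau\bigr)$ for $\tau\in T$. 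As $\Der^2\psi(0)\tau_i=(\rchi_i-\lambda)\tau_i$, tracing over~$T$ gives $-\trace\bigl(\Der(\grad\phi\circ\nu_N)(0)\bigr)=\textsum{i=1}{\vdim}(\rchi_i-\lambda)\,\Der^2\phi(\nu)(\tau_i,\tau_i)$, so $\textsum{i=1}{\vdim}(\rchi_i-\lambda)\,\Der^2\phi(\nu)(\tau_i,\tau_i)\le h$ for every $\lambda>0$; letting $\lambda\downarrow0$ completes the proof. Apart from the truncation, which promotes a~local barrier to a~global one, the rest is a~direct anisotropic mean-curvature computation, trivialized by $\Der\psi(0)=0$.
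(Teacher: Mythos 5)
Your proof fails at the inclusion $A\cap\cylind{T}{0}{\delta}{\varepsilon}\subseteq\Clos{N}$. By definition, $\cato_\nu(g)=\{\sigma+t\nu:\sigma\in T,\ t<g(\sigma)\}$ is the region lying strictly \emph{below} the graph of~$g$ in the direction~$\nu$; thus $A\cap\cylind{T}{0}{\delta}{\varepsilon}\subseteq\Clos{(\cato_\nu(g))}$ says that $z\bullet\nu\le g(Tz)$ for $z\in A$ in the cylinder, i.e.\ $A$ lies \emph{below}~$g$, not above it as you assert. In your case $|Tz|<\varrho$ the chain $z\bullet\nu\ge g(Tz)\ge q_\lambda(Tz)\ge\psi(Tz)$ therefore breaks at its first link: that inequality runs the opposite way. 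Your barrier~$N$, the region above~$\psi$, does not contain~$A$, so \ref{def:n_h_sets} cannot be invoked for it and the rest of the argument collapses.

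The paper's proof takes the barrier on the correct side: it perturbs $g$ \emph{upward}, setting $\psi_\lambda(\sigma)=\tfrac12\bigl(\Der^2g(0)(\sigma,\sigma)+\lambda|\sigma|^2\bigr)$, so that $g\le\psi_\lambda$ near the origin by pointwise twice differentiability, and puts $N_\lambda=\cato_\nu(\psi_\lambda)$, the region \emph{below}~$\psi_\lambda$. Then $A$ below $g\le\psi_\lambda$ gives $A\cap\cylind{T}{0}{\delta}{\varepsilon}\subseteq\Clos{N_\lambda}$ after shrinking~$\delta$ (your truncation device would serve equally well here once its orientation is reversed, and it avoids the shrinking). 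With this barrier all the signs flip relative to your computation: $\nu_{N_\lambda}(0)=-\nu$, $\Der\nu_{N_\lambda}(0)\tau_i=(\rchi_i+\lambda)\tau_i$, and, using $\Der^2\phi(-\nu)=\Der^2\phi(\nu)$, the defining inequality of \ref{def:n_h_sets} combined with \ref{mcv of smooth hypersurfaces} reads $-\trace\bigl(\Der^2\phi(\nu)\circ\Der\nu_{N_\lambda}(0)\bigr)\le h$, that is $\textsum{i=1}{\vdim}(\rchi_i+\lambda)\Der^2\phi(\nu)(\tau_i,\tau_i)\ge -h$; letting $\lambda\to0^{+}$ gives $\textsum{i=1}{\vdim}\rchi_i\,\Der^2\phi(\nu)(\tau_i,\tau_i)\ge -h$. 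This is the form that a barrier actually containing~$A$ delivers, and it is the form invoked in the proof of \ref{lem:one_sided_estimates}, where $\Der^2g(0)=-\Der\an{A}{}(r\nu)$ reverses the sign once more to yield $\textsum{i}{}\rchi_{A,i}\Der^2\phi(\nu)(\tau_i,\tau_i)\le h$. Your derivation arrives at the opposite inequality only because it rests on a barrier that fails to contain~$A$.
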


\begin{proof}
    Let $ \lambda > 0 $. We define
    $\psi_{\lambda}(\sigma) = \frac 12 \big(\Der^{2}g(0)(\sigma,\sigma)\bullet \nu + \lambda
    |\sigma|^{2}\bigr)$ for $ \sigma \in T$ and $ N_\lambda = \cato_{\nu}(\psi_{\lambda}) $. We
    notice that $ \grad \psi_{\lambda}(0) =0 $ and
    \begin{displaymath}
        \Der^2 \psi_{\lambda}(0)(\tau_i, \tau_j) = (\rchi_i + \lambda)(\tau_i \bullet \tau_j)
        \quad \textrm{for every $ i, j \in \{1, \ldots, \vdim\} $.}
    \end{displaymath}
    Choosing $ \delta $ smaller if necessary, we have that
    \begin{displaymath}
        \cato_{\nu}(g) \cap \cylind{T}{0}{\delta}{\varepsilon}
        \subseteq N_\lambda \cap \cylind{T}{0}{\delta}{\varepsilon} \,.
    \end{displaymath}
    Noting that
    \begin{displaymath}
        \nu_{N_\lambda}(\sigma + \psi_{\lambda}(\sigma) \nu)
        = \frac{\grad \psi_{\lambda}(\sigma) -\nu}{\sqrt{1 + |\grad \psi_{\lambda}(\sigma)|^2}}
        \quad \text{for  $ \sigma \in T $}
    \end{displaymath}
    and, consequently,
    \begin{displaymath}
        \Der \nu_{N_\lambda}(0) v \bullet w = \Der^2\psi_{\lambda}(0)(v, w)
        \quad \text{for $ v, w \in  T $} \,,
    \end{displaymath}
    we deduce, using $\uD(\grad \psi_{\lambda})(0)u \bullet v = \uD^2\psi_{\lambda}(0)(u,v)$ for
    $u,v \in T$, $\nu_{N_{\lambda}}(0) = -\nu$, $\uD^2 \phi(-\nu) = \uD^2 \phi(\nu)$, and recalling
    \ref{mr:mcv_of_smooth_hypersurfaces} and \ref{def:n_h_sets}, that
    \begin{displaymath}
        \textsum{i=1}{\vdim} (\rchi_i + \lambda)\Der^2\phi(\nu)(\tau_i, \tau_i)
        =\trace\big(\Der(\grad \phi)(\nu) \circ \Der \nu_{N_\lambda}(0)\bigr) \le h \,.
    \end{displaymath}
    Passing to the limit~$\lambda \to 0^{+}$ yields the claim.
\end{proof}

\nsubsection{Normal bundle and principle curvatures}

In this section we review basic definitions and results about the unit-normal bundle and the
principal curvatures of arbitrary closed sets. We refer to \cite{Kolasinski2023b} and
\cite{HugSantilli}, where these notions are given in a general $ \phi $-anisotropic setting, where
$ \phi $ is a uniformly convex $ \cnt{2} $-norm. However, for the purpose of the present paper we
only need $ \phi = | \cdot | $ to be the Euclidean norm (induced by a scalar product $ \bullet
$). Hence, in relation with the notation used in the aforementioned references, we omit the
dependencies of $ \phi $ in all the subsequent definitions.

\begin{definition}\label{def unit normal bundle}
    Suppose $A \subseteq \R^{\adim}$.
    \begin{enumerate}
\item The \emph{unit-normal bundle} is
        \begin{displaymath}
            N(A) = \bigl( \Clos{A} \times \sphere{\vdim} \bigr)
            \cap \bigl\{
            (a, \eta)
            : \da{A}{}(a + r \eta) = r \; \text{for some $ r > 0 $}
            \bigr\} 
        \end{displaymath}
        and its fibre over $a \in \Clos A$ is denoted $   N(A,a)
        = \bigl\{ \eta : (a,\eta) \in N(A) \bigr\} $.
    \item The \emph{reach function} is
        \begin{displaymath}
            \ar{A}{}(a, \eta) = \sup \R \cap \bigl\{ r : \da{A}{}(a + r \eta) = r \bigr\} \in (0, +\infty]
            \qquad \text{for  $(a, \eta) \in N(A) $} \,.
        \end{displaymath}
    \end{enumerate}
\end{definition}

\begin{remark}
    \label{rem:phi_normal_bundle_in_terms_of_Euclidean_one}
    The function $\ar{A}{}$ is upper semi-continuous,
    by~\cite[Lemma~2.35]{Kolasinski2023b}. Moreover,
    $ N(A) \subseteq \Clos{A} \times \sphere{\vdim} $ is countably $\vdim$~rectifiable in the sense
    of~\cite[3.2.14]{Federer1969} and Borel; see e.g.~\cite[Remark~4.3]{Santilli20a}. Finally,
    employing the structural theorem on the singularities of closed sets in \cite{Menne2019a}, we
    deduce that $ \pp \lIm N(A) \rIm $ is countably $(\HM^{\vdim},\vdim)$~rectifiable of class $\cnt{2}$.
\end{remark}

\begin{definition}
    \label{def:unp_npp}
    Suppose $ A \subseteq \R^{\adim} $.
    \begin{enumerate}
    \item The \emph{nearest point projection} is the multivalued map
        \begin{displaymath}
            \npp{A}{} (x) = \Clos{A} \cap \bigl\{ a : \da{A}{}(x) = |x-a| \bigr\}
            \qquad \textrm{for $ x \in \R^{\adim} $} \,.
        \end{displaymath}
    \item  We define
        \begin{displaymath}
            \an{A}{}(x) = \frac{x - \npp{A}{}(x)}{\da{A}{}(x)}
            \qquad \textrm{for $ x \in \R^{\adim} \without \Clos{A} $} \,.
        \end{displaymath}
    \item The set where $ \npp{A}{}(x) $ is a singleton (i.e. there exists
        a~\emph{unique nearest point}) is denoted
        \begin{displaymath}
            \Unp(A) = \R^{\adim} \cap \bigl\{ x : \HM^0(\npp{A}{}(x)) = 1 \bigr\} \,.
        \end{displaymath}
    \end{enumerate}
\end{definition}

\begin{remark}
    We shall treat $\npp{A}{}|\Unp(A)$ as a single-valued function and write $\npp{A}{}(x) = a$ whenever
    $x \in \Unp(A)$ and $a \in A$ satisfy $\npp{A}{}(x) = \{a\}$.
\end{remark}

\begin{remark}
    \label{rem:basic_props_of_npp_and_dist}
    It is known that the set $\Unp^{}(A) \without \Clos{A}$ \emph{equals} the set of points in
    $ \R^{\adim} \without \Clos{A} $, where $ \da{A}{} $ is differentiable; hence, Borel;
    cf.~\cite[Lemma~2.41(c)]{Kolasinski2023b} and~\cite[4.6(1)]{Menne2019}. Therefore, by the
    Rademacher theorem, $ \npp{A}{}(x) $ is a singleton for~$ \LM^{\adim} $ almost~all
    $ x \in \R^{\adim} \without \Clos{A} $.
\end{remark}

    We now proceed to differentiate the multivalued map $ \an{A}{} $ (and,
    analogously,~$ \npp{A}{} $). We~say that $ \an{A}{} $ is
    \emph{differentiable at $ x \in \R^{\adim} \without \Clos{A} $} if
    $ \an{A}{}(x) $ is a singleton and there exists a linear map
    $ L : \R^{\adim}\rightarrow \R^{\adim} $ such for any $ \varepsilon > 0 $
    there exists $ \delta > 0 $ such that
    \begin{displaymath}
        | w - \an{A}{}(x) - L(y-x)|\le \varepsilon |y-x|
        \quad \text{for $ y \in \oball{x}{\delta}$ and $ w \in \an{A}{}(y)$} \,.
    \end{displaymath}
    Evidently, if $ x \in \R^{\adim}\without \Clos{A} $, then $ \an{A}{} $
    is differentiable at $ x $ if and only if $ \npp{A}{} $ is
    differentiable at $ x $. The linear map $ L $ is unique and it is denoted by
    $ \Der \an{A}{}(x) $. The set of differentiability points of
    $ \an{A}{} $ is the domain $ \dmn \Der \an{A}{} $ of the map
    $ \Der \an{A}{} $. The following result summarizes some of the main facts on
    $ \Der \an{A}{} $.

\begin{lemma}[\protect{cf.\ \cite[Theorem~1.5, Lemma~4.12, Lemma
      4.15]{Kolasinski2023b}\footnote{In~\cite{Kolasinski2023b}\label{lem:diff_points_normal_map}
        we denote with $ \Sigma^{}_2(A) $ the set of points in
        $ \R^{\adim}\without A $ where the distance function $ \da{A}{} $ is
        not pointwise twice differentiable. This set coincides with
        $ \R^{\adim} \without (A \cup \dmn \Der \an{A}{}) $ by
        \cite[2.41(e)]{Kolasinski2023b}.}}]
    \label{theo:distance_twice_diff}
    Assume $ \varnothing\neq A \subseteq \R^{\adim}$. Then
    \begin{enumerate}
    \item $\LM^{\adim} \bigl( \R^{\adim} \without (A \cup \dmn \Der \an{A}{}) \bigr) = 0$;
    \item
        \label{theo:distance_twice_diff:almost_all_fibers_in_dmn_Dnu}
        $\bigl\{ a + r \eta: 0 < r < \ar{A}{}(a, \eta) \bigr\} \subseteq
        \dmn \Der \an{A}{}$ for $\HM^{\vdim}$ almost all
        $(a, \eta) \in N(A)$;
    \item there exists a Borel measurable function
        $ \arl{A}{}: N(A) \rightarrow [0, +\infty]$ such that
        \begin{gather}
            \arl{A}{}(a, \eta) = \ar{A}{}(a, \eta) \quad \text{for $ \HM^{\vdim} $ a.a. $(a, \eta) \in N(A) $}
            \\
            \begin{gathered}
                \text{and if there exists $0 < s < \arl{A}{}(a, \eta)$
                  with $a + s \eta \in \dmn \Der \an{A}{}$,}
                \\
                \text{then $a + r \eta \in \dmn \Der \an{A}{}$
                  for all $0 < r < \arl{A}{}(a, \eta)$} \,;
            \end{gathered}
        \end{gather}
    \item
        \label{theo:distance_twice_diff:diagnoalizable}
        if $ x \in \dmn \Der \an{A}{} $ and $T= \Tan(S(A,r),x) $, then
        $ \Der \an{A}{}(x)(\an{A}{}(x)) =0 $, $ \Der \an{A}{}(x) $ maps $ T $ into $ T $ and
        $ \Der \an{A}{}(x)|T $ is symmetric with all the eigenvalues smaller or equal that
        $ \da{A}{}(x)^{-1} $ (in particular $ \Der \an{A}{}(x) $ is symmetric).
    \end{enumerate}
\end{lemma}

\begin{definition}[\protect{cf.~\cite[Definition~3.6]{HugSantilli}}]
    Let $A \subseteq \R^{\adim}$. We set
    \begin{displaymath}
        \widetilde{N}(A)
        = N(A) \cap \bigl\{
        (a,\eta) 
        : a  + r \eta  \in \dmn \Der \an{A}{} \text{ for some } 0 < r < \arl{A}{}(a, \eta)
        \bigr\} \,.
    \end{displaymath}
\end{definition}

\begin{remark}
    \label{rem:def:kappas}
     Note that $\HM^{\vdim}(N(A) \without \widetilde{N}(A)) = 0$
    by~\ref{theo:distance_twice_diff}\ref{theo:distance_twice_diff:almost_all_fibers_in_dmn_Dnu}.
    Suppose $ (a, \eta) \in \widetilde{N}(A) $ and $ 0 < r < \arl{A}{}(a, \eta)$. Referring
    to~\ref{theo:distance_twice_diff}\ref{theo:distance_twice_diff:diagnoalizable}
    and~\cite[Lemma~4.12(c)]{Kolasinski2023b} we let
    \begin{displaymath}
        \rchi_{A,1}(a+r\eta) \le \ldots \le \rchi_{A,\vdim}(a+r\eta)
    \end{displaymath}
    be the eigenvalues of $\Der \an{A}{}(a + r\eta)|\Tan(S(A,r), a + r \eta)$. As proved
    in~\cite[Lemma~3.5, see also Remark~3.7]{HugSantilli}, the number
    \begin{displaymath}
        \kappa_{A,i}(a, \eta)
        = \frac{\rchi_{A,i}(a + r \eta)}{1-r\rchi_{A,i}(a + r \eta)}
        \in (-\infty,\infty]
    \end{displaymath}
    does not depend on $ r $ and the functions
    $ \kappa_{A,i}: \widetilde{N}(A) \rightarrow (-\infty,\infty] $ are Borel measurable (recall
    that $ 1-r\rchi_{A,i}(a + r \eta) \geq 0 $ by Lemma~\ref{theo:distance_twice_diff}).
\end{remark}

\begin{definition}[\protect{cf.~\cite[Definition~3.11]{HugSantilli}}]
    \label{def:normal_bundle_stratification}
    Let $A \subseteq \R^{\adim}$ and
    $ \kappa_{A,1} \le \ldots \le \kappa_{A,\vdim} $ be as
    in~\ref{rem:def:kappas}. We define for $d \in \{ 1, \ldots , \vdim-1\}$
    \begin{gather}
        \widetilde{N}_d(A)
        = \widetilde{N}(A) \cap \bigl\{ (a,\eta)
        : \kappa_{A, d}(a,\eta) < \infty ,\,
        \kappa_{A, d+1}(a,\eta) = \infty
        \bigr\}
        \,,
        \\
        \widetilde{N}_0(A)
        = \widetilde{N}(A) \cap \bigl\{
        (a,\eta) : \kappa_{A, 1}(a,\eta) = \infty
        \bigr\} \,,
        \\
        \text{and} \quad
        \widetilde{N}_{\vdim}(A)
        =  \widetilde{N}(A) \cap \bigl\{
        (a,\eta)  : \kappa_{A,\vdim}(a,\eta) < \infty
        \bigr\} \,.
    \end{gather}
\end{definition}

\begin{definition}
    Let $A \subseteq \R^{\adim}$. A~\emph{principal frame-field of~$A$}
    is a Borel map
    $ \tau = (\tau_1,\ldots,\tau_{\vdim}) : \widetilde{N}^{}(A) \to
    (\sphere{\vdim})^{\vdim} $, such that for
    $(a, \eta) \in \widetilde{N}^{}(A)$, $ 0 < r < \arl{A}{}(a, \eta) $, and
    $ i\in \{ 1, \ldots , \vdim \} $ there holds
    \begin{gather}
        \tau_1(a,\eta), \ldots , \tau_{\vdim}(a, \eta)
        \quad \text{is an orthonormal basis of $\Tan(S(A,r), a + r \eta)$} \,,
        \\
        \text{and} \quad
        \Der \an{A}{}(a + r\eta)\tau_i(a, \eta)
        = \rchi_{A,i}^{}(a + r\eta) \tau_i(a, \eta) \,.
    \end{gather}
\end{definition}

\begin{lemma}
    \label{lem:pff_exists}
    Let $A \subseteq \R^{\adim}$ be closed. Then there exists a~principal frame field of~$A$.
\end{lemma}

\begin{proof}
    Let $e_1, \ldots, e_{\adim}$ be the standard basis of $\R^{\adim}$, let $ X $ be the vectorspace
    of symmetric endomorphisms of $ \R^{\adim} $ and $ \orthgroup{\adim} $ the orthogonal group of
    $ \R^{\adim} $. Define
    \begin{displaymath}
        \Lambda
        = \R^{\adim} \cap \bigl\{
        (\lambda_1, \ldots, \lambda_{\adim})
        : \lambda_1 \le \lambda_2 \le \cdots \le \lambda_{\adim}
        \bigr\}
    \end{displaymath}
    and the multifunction $ G $ (cf.\ \cite{CastaingValadier}) from $ X $ to closed subsets of
    $ \R^{\adim} \times \orthgroup{\adim} $,
    \begin{displaymath}
        G(L) = (\Lambda \times \orthgroup{\adim}) \cap \{(\lambda, u) : L(u(e_i))
        = (\lambda \bullet e_i) u(e_i) \; \textrm{for $ i = 1, \ldots, \adim$}\} 
    \end{displaymath}
    for $L \in \End{\R^{\adim}}$. Noting that
    $ \graph(G) = \{(L, \lambda, u) : L \in X, \; (\lambda, u) \in G(L)\} $ is a closed subset of
    $ X \times \R^{\adim} \times \orthgroup{\adim} $, we can apply Aumann selection theorem
    \cite[Theorem III.22]{CastaingValadier} to ensure the existence of a Borel map
    $ g : X \rightarrow \R^{\adim} \times \orthgroup{\adim} $ such that $ g(L) \in G(L) $ for each
    $ L \in X $.
    
    Recalling~\ref{rem:def:kappas}, \cite[\S{2.5} and Lemma~2.3]{HugSantilli},
    and~\ref{theo:distance_twice_diff} we see that $\widetilde{N}^{}(A)$ and $\dmn \uD \npp{A}{}$
    are Borel sets, $\uD \npp{A}{} : \dmn \uD \npp{A}{} \to \End{\R^{\adim}}$ and $\arl{A}{}$ are
    Borel maps; hence, we conclude that $\uD \an{A}{}$ is also a Borel map. Define
    \begin{displaymath}
        \Upsilon : \widetilde{N}^{}(A) \to \End{\R^{\adim}}
        \quad \text{by} \quad
        \Upsilon(a,\nu) = \uD \an{A}{}( a + \tfrac 12 \arl{A}{}(a,\nu) \nu )
    \end{displaymath}
    for $(a,\nu) \in \widetilde{N}^{}(A)$. Clearly $\Upsilon$ is Borel as a composition of Borel
    maps.  From~\ref{theo:distance_twice_diff}\ref{theo:distance_twice_diff:diagnoalizable} we
    conclude that $\im \Upsilon \subseteq X$. Denoting by
    $\sigma : \R^{\adim} \times \orthgroup{\adim} \to \orthgroup{\adim}$ the projection onto the
    second factor, we notice that $ \mu = \sigma \circ g \circ \Upsilon $ is a Borel map from
    $\widetilde{N}(A)$ into $ \orthgroup{\adim} $.  Since
    $ g(\Upsilon(a, \nu)) \in G(\Upsilon(a, \nu)) $ for each $(a, \nu) \in \widetilde{N}(A) $, there
    exists for each $ (a, \nu) \in \widetilde{N}(A) $ a point $ \lambda \in \Lambda $ such that
    \begin{displaymath}
        \langle \mu(a, \nu)(e_i), \Upsilon(a, \nu) \rangle 
        = (\lambda \bullet e_i)\, \mu(a, \nu)(e_i) 
        \quad \textrm{for $ i = 1, \ldots , \adim $} \,. 
    \end{displaymath}
    In particular, $ \{\mu(a, \nu)(e_i): i = 1, \ldots, \adim\} $ is an orthonormal basis of
    $ \R^{\adim} $ of eigenvectors of $ \uD \an{A}{}( a + \tfrac 12 \arl{A}{}(a,\nu) \nu ) $, for
    each $ (a, \nu) \in \widetilde{N}(A) $. For $ i = 1, \ldots, \adim $ we define
    \begin{displaymath}
        B_i = \widetilde{N}(A) \cap \{ (a, \nu) : \mu(a, \nu)(e_i) = \nu \}
    \end{displaymath}
    and we notice from~\ref{theo:distance_twice_diff}\ref{theo:distance_twice_diff:diagnoalizable}
    that the sets $ B_i $ form a Borel partition of $ \widetilde{N}(A) $. Finally, for
    $ i = 1,\ldots, \adim $ and $ (a, \nu) \in B_i $ we define
    \begin{gather}
      \tau_1(a, \nu) = \mu(a, \nu)(e_1), \ldots, \tau_{i-1}(a, \nu) = \mu(a, \nu)(e_{i-1}), 
      \\
      \tau_{i}(a, \nu) = \mu(a, \nu)(e_{i+1}), \ldots \tau_{\vdim}(a, \nu) = \mu(a, \nu)(e_{\adim}) \,;
    \end{gather}
    hence, the resulting maps
    $ \tau_1, \ldots, \tau_{\vdim} : \widetilde{N}(A) \rightarrow \sphere{\vdim} $ form a principal frame
    field of~$ A $.
\end{proof}

\begin{lemma}[\protect{\cite[Lemma~3.9]{HugSantilli}}] 
    \label{lem:HS39}
    Let $A \subseteq \R^{\adim}$ and $\tau = (\tau_1,\ldots,\tau_{\vdim})$ be
    a~principal frame-field of~$A$. For $i \in \{1,2,\ldots,\vdim\}$
    define $\zeta_i : \widetilde{N}^{}(A) \to \R^{\adim} \times \R^{\adim}$ by
    \begin{displaymath}
        \zeta_i(a,\eta) = \left\{
            \begin{aligned}
              &\pp^{*}(\tau_i(a,\eta)) + \kappa_{A,i}(a,\eta) \qq^{*}(\tau_i(a,\eta)) 
              &&\text{if $\kappa_{A,i}(a,\eta) < \infty$} \,,
              \\
              &\qq^{*}(\tau_i(a,\eta))
              &&\text{if $\kappa_{A,i}(a,\eta) = \infty$} \,,
            \end{aligned}
        \right.
    \end{displaymath}
    and
    \begin{displaymath}
        J_A(a,\eta) = \frac{|\tau_1(a,\eta) \wedge \cdots \wedge \tau_{\vdim}(a,\eta)|}
        {|\zeta_1(a,\eta) \wedge \cdots \wedge \zeta_{\vdim}(a,\eta)|}
        \quad \text{for $(a,\eta) \in \widetilde{N}^{}(A)$}\,.
    \end{displaymath}
    If $W \subseteq N(A)$ is~$\HM^{\vdim}$~measurable and
    $\HM^{\vdim}(W) < \infty$, then for $\HM^{\vdim}$~almost all
    $(a,\eta) \in W$
    \begin{gather}
        \Tan^{\vdim}(\HM^{\vdim} \restrict W, (a,\eta))
        = \lin \bigl\{ \zeta_1(a,\eta), \ldots, \zeta_{\vdim}(a,\eta) \bigr\}
        \in \grass{2(\adim)}{\vdim} 
        \\
        \text{and} \quad
        (\HM^{\vdim} \restrict W, \vdim) \ap J_{\vdim} \pp(a,\eta)
        =  J_A(a,\eta) \CF{\widetilde{N}^{}_{\vdim}(A)}(a,\eta) \,.
    \end{gather}
\end{lemma}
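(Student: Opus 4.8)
The plan is to realise $N^{\phi}(A)$ as a family of ``parallel copies'' of the $\polar\phi$-equidistant sets $S^{\phi}(A,r)$ and to transport the rectifiable structure between them. For $r>0$ set
\begin{displaymath}
    \Psi_r(x) = \bigl( \npp{A}{\phi}(x),\ \an{A}{\phi}(x) \bigr)
    \qquad \text{for } x \in S^{\phi}(A,r) \cap \Unp^{\phi}(A) \,,
\end{displaymath}
a map whose set-theoretic inverse, on the portion of $N^{\phi}(A)$ over which it is defined, is the restriction to $N^{\phi}(A)$ of the affine map $(a,\eta) \mapsto a + r\eta$; note $\pp \circ \Psi_r = \npp{A}{\phi}$. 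First I would reduce to a good set: by~\ref{rem:phi_normal_bundle_in_terms_of_euclidean_one} the set $N^{\phi}(A)$ is Borel and countably $\vdim$~rectifiable, so $\HM^{\vdim}$~a.a.\ $(a,\eta) \in W$ are points of approximate tangency of $\HM^{\vdim} \restrict W$, and by~\ref{theo:distance_twice_diff}(b),(c) we have $\HM^{\vdim}(N^{\phi}(A) \without \widetilde N^{\phi}(A)) = 0$ and, for $\HM^{\vdim}$~a.a.\ $(a,\eta) \in W$ and every $0 < r < \arl{A}{\phi}(a,\eta)$, the point $a + r\eta$ lies in $\dmn \Der \an{A}{\phi}$. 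Slicing $W$ by the increasing sets $W_k = W \cap \{(a,\eta) : \arl{A}{\phi}(a,\eta) > 1/k\}$, it suffices to argue on each $W_k$, so I may assume $(a,\eta) \in \widetilde N^{\phi}(A)$ and fix $0 < r < \arl{A}{\phi}(a,\eta)$ with $x := a + r\eta \in \dmn \Der \an{A}{\phi}$.

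For the tangent plane, note that on $S^{\phi}(A,r) \cap \Unp^{\phi}(A)$ one has $\da{A}{\phi} \equiv r$, hence the identity of maps $\npp{A}{\phi} = \id{\R^{\adim}} - r\, \an{A}{\phi}$ there. By~\ref{rem:basic_props_of_npp_and_dist} the tangent plane of $S^{\phi}(A,r)$ at $x$ is $T_x := \Tan(S^{\phi}(A,r),x) = \ker \Der \da{A}{\phi}(x) = \Tan(\Bdry \wulff^{\phi},\eta) \in \grass{\adim}{\vdim}$, and by~\ref{theo:distance_twice_diff}(d) the map $\Der \an{A}{\phi}(x)$ takes $T_x$ into $T_x$ and is diagonalisable there. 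Choosing a $\phi$-principle frame-field $\tau = (\tau_1,\ldots,\tau_{\vdim})$ and writing $\rchi_i := \rchi^{\phi}_{A,i}(x)$, we differentiate the above identity along $S^{\phi}(A,r)$ at $x$ and use $\Der \an{A}{\phi}(x)\tau_i = \rchi_i \tau_i$ to obtain
\begin{displaymath}
    \Der \Psi_r(x)\tau_i = \bigl( (1 - r\rchi_i)\tau_i,\ \rchi_i \tau_i \bigr)
    = (1 - r\rchi_i)\, \pp^{*}(\tau_i) + \rchi_i\, \qq^{*}(\tau_i)
    \qquad \text{for } i \in \{1,\ldots,\vdim\} \,.
\end{displaymath}
Since $\kappa^{\phi}_{A,i}(a,\eta) = \rchi_i / (1 - r\rchi_i)$ by~\ref{rem:def:kappas}, the vector $\Der \Psi_r(x)\tau_i$ is a \emph{nonzero} scalar multiple of $\zeta_i(a,\eta)$ in each case of its definition: if $\kappa^{\phi}_{A,i}(a,\eta) < \infty$ then $1 - r\rchi_i \neq 0$ and $\Der \Psi_r(x)\tau_i = (1 - r\rchi_i)\, \zeta_i(a,\eta)$, whereas if $\kappa^{\phi}_{A,i}(a,\eta) = \infty$ then $1 - r\rchi_i = 0$, $\rchi_i = 1/r$, and $\Der \Psi_r(x)\tau_i = r^{-1} \zeta_i(a,\eta)$. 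As $\zeta_1(a,\eta),\ldots,\zeta_{\vdim}(a,\eta)$ are linearly independent, $\Der \Psi_r(x)$ is injective on $T_x$ with image $\lin\{\zeta_1(a,\eta),\ldots,\zeta_{\vdim}(a,\eta)\}$. Since $\Psi_r$ is injective and, on each $W_k$ (where $r$ stays a fixed fraction below the reach), bi-Lipschitz between $S^{\phi}(A,r) \cap \Unp^{\phi}(A)$ and $N^{\phi}(A)$ -- which rests on the upper semicontinuity of $\ar{A}{\phi}$ from~\ref{rem:phi_normal_bundle_in_terms_of_euclidean_one} and the standard Lipschitz bounds for $\npp{A}{\phi}$ and $\an{A}{\phi}$ away from the cut locus -- it transports approximate tangent planes; combined with $\Tan^{\vdim}(\HM^{\vdim} \restrict S^{\phi}(A,r),x) = T_x$ for $\HM^{\vdim}$~a.a.\ $x$, this yields $\Tan^{\vdim}(\HM^{\vdim} \restrict W,(a,\eta)) = \Der \Psi_r(x)[T_x] = \lin\{\zeta_1(a,\eta),\ldots,\zeta_{\vdim}(a,\eta)\}$, which is the first assertion.

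With the tangent plane identified, $(\HM^{\vdim} \restrict W,\vdim)\, \ap J_{\vdim}\pp(a,\eta)$ is, by definition, the Jacobian of $\pp$ restricted to $\lin\{\zeta_1,\ldots,\zeta_{\vdim}\}$; taking $\zeta_1,\ldots,\zeta_{\vdim}$ as a basis of this plane gives
\begin{displaymath}
    (\HM^{\vdim} \restrict W,\vdim)\, \ap J_{\vdim}\pp(a,\eta)
    = \frac{| \pp(\zeta_1(a,\eta)) \wedge \cdots \wedge \pp(\zeta_{\vdim}(a,\eta)) |}{| \zeta_1(a,\eta) \wedge \cdots \wedge \zeta_{\vdim}(a,\eta) |} \,,
\end{displaymath}
and $\pp(\zeta_i(a,\eta)) = \tau_i(a,\eta)$ exactly when $\kappa^{\phi}_{A,i}(a,\eta) < \infty$; on the stratum $\widetilde N^{\phi}_{\vdim}(A)$ the right-hand side is thus $|\tau_1 \wedge \cdots \wedge \tau_{\vdim}| / |\zeta_1 \wedge \cdots \wedge \zeta_{\vdim}| = J_A(a,\eta)$, the remaining strata $\widetilde N^{\phi}_d(A)$ with $d < \vdim$ being handled the same way in accordance with the $\qq^{*}$-convention in the definition of $\zeta_i$, while outside $\widetilde N^{\phi}(A)$ -- an $\HM^{\vdim}$~null set -- the identity holds by virtue of the factor $\CF{\widetilde N^{\phi}(A)}$. (Equivalently one may derive the Jacobian identity from the area formula for $\pp|W$, using $\pp \circ \Psi_r = \npp{A}{\phi}$.) The only point I expect to require real care is verifying that on each $W_k$ the map $\Psi_r$ is bi-Lipschitz, so that the transport of approximate tangent planes (and, alternatively, the area formula) is legitimate; this rests on the reach estimates underlying the results cited above, after which the remaining computations are routine linear algebra.
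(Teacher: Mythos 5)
The paper does not prove this statement: Lemma~\ref{lem:HS39} is quoted verbatim from~\cite[Lemma~3.9]{HugSantilli}, so there is no ``paper's own proof'' to compare against. Your strategy — realising $N^{\phi}(A)$ as the bi-Lipschitz image of the level set $S^{\phi}(A,r)\cap\Unp^{\phi}(A)$ via $\Psi_r = (\npp{A}{\phi},\an{A}{\phi})$, computing $\Der\Psi_r(x)\tau_i = (1-r\rchi_i)\pp^{*}(\tau_i) + \rchi_i\qq^{*}(\tau_i)$ from the identity $\npp{A}{\phi} = \id{\R^{\adim}} - r\,\an{A}{\phi}$ on the level set, recognising these as nonzero multiples of $\zeta_i$, and then transporting tangent planes and Jacobians — is the natural one and is consistent with the toolkit surrounding this lemma (cf.\ \ref{theo:distance_twice_diff}, \ref{rem:def:kappas}, \ref{rem:eigenval_nearest_point_pr}); I believe it is essentially the argument in~\cite{HugSantilli}.

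There is, however, a concrete error in your Jacobian step. You assert that on the lower strata $\widetilde N^{\phi}_d(A)$, $d < \vdim$, the identity $(\HM^{\vdim}\restrict W,\vdim)\ap J_{\vdim}\pp = J_A\CF{\widetilde N^{\phi}(A)}$ is ``handled the same way in accordance with the $\qq^{*}$-convention''. It is not: on $\widetilde N^{\phi}_d(A)$ one has $\pp(\zeta_i(a,\eta)) = 0$ for every $i > d$, so $\pp(\zeta_1)\wedge\cdots\wedge\pp(\zeta_{\vdim}) = 0$ and the approximate Jacobian vanishes, whereas $J_A(a,\eta) = |\tau_1\wedge\cdots\wedge\tau_{\vdim}|/|\zeta_1\wedge\cdots\wedge\zeta_{\vdim}|$ is strictly positive because both the $\tau_i$ and the $\zeta_i$ are linearly independent. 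A minimal counterexample: $A=\{0\}\subseteq\R^2$ with $\phi$ Euclidean, where $\widetilde N(A) = N(A) = \{0\}\times\sphere{1}$, $\kappa_{A,1}\equiv\infty$, and the approximate Jacobian of $\pp$ is identically $0$ while $J_A\equiv 1$. The formula as you wrote it (and as transcribed in the lemma) can therefore only be correct with $\CF{\widetilde N^{\phi}_{\vdim}(A)}$ in place of $\CF{\widetilde N^{\phi}(A)}$, or with a convention that sets $J_A=0$ off the top stratum; you should either derive the correct restriction from the computation you already did, or flag that the quoted indicator must be on $\widetilde N^{\phi}_{\vdim}(A)$. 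In every application in this paper (e.g.\ Remark~\ref{rem:coarea_formula_exhaust} and Lemma~\ref{lem:coarea}) the hypothesis $\HM^{\vdim}\bigl((N^{\phi}(A)|\Omega)\without\widetilde N^{\phi}_{\vdim}(A)\bigr)=0$ is in force, so the lower strata are null and the discrepancy is harmless there; but as a free-standing proof of the lemma for arbitrary $A$ your argument is not complete without this correction. The remaining worry you yourself flag — the bi-Lipschitz estimate for $\Psi_r$ on each $W_k$ — is indeed the place where reach estimates must be invoked, but it is a standard point (Federer's Lipschitz bounds for $\npp{}{}$ and $\an{}{}$ strictly inside the reach) and your sketch is adequate there.
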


\section{Quadratic flatness}
\label{sec:quadratic flatness}

The main goal of this section is to prove Theorem~\ref{lem:minkowski_content_of_nh_sets}. This result contains as a special case Theorem \ref{main:rectifiability}; cf. \ref{main_rectifiability proof}. 

\begin{lemma}[Basic estimates]
    \label{lem:one_sided_estimates}
    Suppose $\Omega \subseteq \R^{\adim}$ is open, $ A \subseteq \Omega $ is an $(\vdim, h)$~subset
    of $ \Omega $ with respect to~$ \phi $ and $\tau = (\tau_1, \ldots , \tau_{\vdim}) $ is
    a~(Euclidean) principle frame-field of~$A$. Then
    \begin{equation}
        \label{lem:one_sided_estimates_eq1}
        \textsum{i=1}{\vdim}  \kappa_{A,i}(a, \nu)\,\Der^2\phi(\nu)(\tau_i(a, \nu),\tau_i(a, \nu))
        \le  h
    \end{equation}
    and
    \begin{equation}
        \label{lem:one_sided_estimates_eq2}
        \textsum{i=1}{\vdim} \kappa_{A,i}(a, \nu)
        \le \vdim \kappa_{A,\vdim}(a, \nu)
        \le \vdim \gamma(\phi)^{-1} \bigl( h \phi(\nu)^{-1} + (\vdim-1) c_{2}(\phi)\ar{A}{}(a, \nu)^{-1} \bigr)
    \end{equation} 
    for $ \HM^{\vdim} $ almost all $(a, \nu) \in N(A) | \Omega  $. In particular, 
    \begin{equation}
         \label{lem:one_sided_estimates_eq3}
        \HM^{\vdim}\bigl((N(A)|\Omega) \without \widetilde{N}_{\vdim}(A) \bigr) =0. 
    \end{equation} 
\end{lemma}

\begin{proof}
    Choose $ (a, \nu) \in \widetilde{N}(A) | \Omega $ and $ 0 < r < \arl{A}{}(a, \nu) $. Without
    loss of generality we~may assume $a = 0$ and we set $T = \lin \{ \nu \}^{\perp}$. To~verify the
    first equality apply~\cite[3.29]{HugSantilli} with~$j=1$. Noting that
    $ r\nu \in \Unp(A) \cap S(A,r) $, we combine \cite[Lemma~2.41(e) and Lemma
    2.44]{Kolasinski2023b} to find $ \varepsilon, \delta > 0 $ and a semiconcave function
    $f : T \rightarrow \R $ such that
    \begin{gather}
        f(0) = r \,,
        \quad
        \Der f(0) = 0 \,,
        \quad
        \text{$f$ is pointwise twice differentiable at $0$} \,,
        \\
        \Der^2 f(0)(u, v)
        = - \Der \an{A}{}(r\nu) u \bullet v
        \qquad \text{for $ u,v \in \Tan(S(A, r), r\nu) = T $} \,,
        \\
        \cylind{\ker \beta(\nu)}{0}{\varepsilon}{\delta} \subseteq \Omega \,,
        \\
        S(A,r) \cap \cylind{T}{r\nu}{\delta}{\varepsilon} =
        \graph_{\nu}(f) \cap \cylind{T}{r\nu}{\delta}{\varepsilon} \,,
        \\
        \text{and} \quad
        \{ x: \da{A}{}(x) < r \} \cap \cylind{T}{r\nu}{\delta}{\varepsilon}
        = \cato_{\nu}(f) \cap \cylind{T}{r\nu}{\delta}{\varepsilon} \,.
    \end{gather}
    Let $ g : T \rightarrow \R $ be defined by $ g(\sigma) = f(\sigma)-r$ for
    $\sigma \in T$ and observe that
    \begin{displaymath}
        A \cap \cylind{ \ker \beta(\nu)}{0}{\varepsilon}{\delta} \subseteq \Clos{\cato_{\nu}(g)} \,;
    \end{displaymath}
    otherwise, if
    $ x \in A \cap \cylind{\ker \beta(\nu)}{0}{\varepsilon}{\delta} \without
    \Clos{\cato_{\nu}(g)} $ and $y = T\, x$, then
    \begin{gather}
        x = y + s\nu \quad \text{for some $0 < s < \infty$ with $g(y) < s$} \,,
        \quad
        f(y) < s+r \,,
        \\
        y + (s+r)\nu = x + r\nu \in \cylind{T}{r\nu}{\delta}{\varepsilon} \without \Clos{\cato_{\nu}(f)} \,,
        \quad \da{A}{}(x+r\nu) > r 
    \end{gather}
    but, since $x \in A$, there holds $\da{A}{}(x+r\nu) \le r$.
    
    Recall that $A \cap \cylind{\ker \beta(\nu)}{0}{\varepsilon}{\delta} $ is an $(\vdim,h)$~subset
    of~$\cylind{ \ker \beta(\nu)}{0}{\varepsilon}{\delta}$ with respect to~$\phi$ and we now know
    that
    $ A \cap \cylind{\ker \beta(\nu)}{0}{\varepsilon}{\delta} \subseteq \Clos{\cato_{\nu}(g)} $;
    hence, we conclude from~\ref{lem:weak_maximum_principle}
    \begin{equation}
        \label{eq:chi_bound}
        \textsum{i=1}{\vdim}  \rchi_{A,i}(a + r\nu)
        \,\Der^2\phi(\nu)(\tau_i(a, \nu),\tau_i(a, \nu)) \le h \,.
    \end{equation}
    Observe, recalling~\ref{rem:def:kappas}, that for $i \in \{1,2,\ldots,\vdim\}$ 
    \begin{displaymath}
        \rchi_{A,i}(a + r\nu) = \lim_{t \to \kappa_{A,i}(a, \nu)^{-}} \frac{t}{1 + r t}
        = \left\{
            \begin{aligned}
              &\frac{\kappa_{A,i}(a, \nu)}{1 + r \kappa_{A,i}(a, \nu)}
              && \text{if $\kappa_{A,i}(a, \nu) < \infty$} \,,
              \\
              &\frac 1r
              &&\text{if $\kappa_{A,i}(a, \nu) = \infty$} \,.
            \end{aligned}
        \right.
    \end{displaymath}
    Since the choice of $0 < r < \arl{A}{}(a, \nu)$ was arbitrary, we may pass
    to the limit $r \to 0^{+}$ and conclude from~\eqref{eq:chi_bound} that
    $ \kappa_{A,\vdim}(a, \nu) < \infty $ and
    \begin{displaymath}
        \textsum{i=1}{\vdim}  \kappa_{A,i}(a, \nu)\,\Der^2\phi(\nu)(\tau_i(a, \nu),\tau_i(a, \nu)) \le  h.
    \end{displaymath}

    In order to prove the second estimate we can further assume that
    $ \kappa_{A,\vdim}(a, \nu) > 0 $, otherwise the conclusion is trivially
    true. Since $ \kappa_{A,i}(a, \nu) \ge - \ar{A}{}(a, \nu)^{-1} $ for
    $i \in \{1,2,\ldots,\vdim\}$ by~\cite[Remark 3.8]{HugSantilli}, we estimate
    \begin{multline}
        \kappa_{A,\vdim}(a, \nu)
        \le \gamma(\phi)^{-1}\Der^2\phi(\nu)(\tau_{\vdim}(a, \nu),\tau_{\vdim}(a, \nu))\,\kappa_{A,\vdim}(a, \nu)
        \\
        \le \gamma(\phi)^{-1}\Big( h  - \textsum{i=1}{\vdim-1}\kappa_{A,i}(a, \nu)
        \,\Der^2\phi(\nu)(\tau_i(a, \nu),\tau_i(a, \nu))\Big)
        \\
        \le \gamma(\phi)^{-1}\Big( h + \ar{A}{}(a, \nu)^{-1}\textsum{i=1}{\vdim-1}
        \Der^2\phi(\nu)(\tau_i(a, \nu),\tau_i(a, \nu))\Big)
        \\
        \le \gamma(\phi)^{-1}\bigl( h  + (\vdim-1) c_{2}(\phi)\ar{A}{}(a, \nu)^{-1}\bigr) \,.
        \qedhere
    \end{multline}
\end{proof}

\begin{remark}\label{rem:coarea_formula_exhaust}
    This remark will be used in the proof of~\ref{lem:minkowski_content_of_nh_sets}. Its purpose
    should be clear given the fact that $N(A)$ might not have, a~priori, locally finite
    $\HM^{\vdim}$~measure. Nonetheless, $N(A)$ is always countably
    $(\HM^{\vdim}, \vdim)$-rectifiable, hence a~countable union of sets with locally finite
    $\HM^{\vdim}$~measure. Consequently, the coarea formula may be applied to $ N(A) $, as long as
    the integrand is non-negative. To make this statement rigorous suppose
    $\Omega \subseteq \R^{\adim}$ is open, $A \subseteq \Omega $ is relatively closed, and
    $ g : \R^{\adim} \times \sphere{\vdim} \rightarrow \R $ is a non-negative Borel function. Define
    \begin{displaymath}
        R_A(t) = N(A)|\Omega \cap \{ (a,\nu) : \ar{A}{}(a,\nu) > t \}
        \quad \text{for $0 < t < \infty$} \,.
    \end{displaymath}
    Using~\ref{lem:HS39} and its symbols together with~\cite[Theorem~3.16(a)]{HugSantilli} we see
    that for $ t > 0 $ the set $R_A(t)$ meets each compact subset of
    $ \R^{\adim} \times \sphere{\vdim} $ on an~$(\HM^{\vdim},\vdim)$-rectifiable set and
    \begin{displaymath}
        J_A(a,\nu)\CF{\widetilde{N}_{\vdim}(A)}(a,\nu) = \bigl(\HM^{\vdim} \restrict R_A(t) , \vdim \bigr) \ap J_{\vdim} \pp(a,\nu)
    \end{displaymath}
    for $ \HM^{\vdim} $ almost all $(a,\nu) \in R_A(t)$. Employing
    \cite[Lemma~3.25(a)(b)(c)]{HugSantilli}, we see that
    \begin{gather}
        \pp^{-1} \{a\} \cap \widetilde{N}_{\vdim}(A) =  \pp^{-1}\{a\} \cap N(A)
        \quad \textrm{for $ a \in \pp \lIm \widetilde{N}_{\vdim}(A) \rIm $}
        \\
        \text{and} \quad
        \HM^{\vdim} \bigl( \pp \lIm N(A) \rIm \without \pp \lIm \widetilde{N}_{\vdim}(A) \rIm \bigr) = 0 \,.
    \end{gather}
    Henceforth, we deduce by coarea formula~\cite[3.2.22]{Federer1969} that
    \begin{multline}
        \textint{\widetilde{N}_{\vdim}(A)\cap R_A(t)}{}
        J_A(a,\nu)\, g(a, \nu) \ud \HM^{\vdim}(a,\nu)
        \\
        = \textint{\pp \lIm \widetilde{N}_{\vdim}(A)\cap R_A(t) \rIm}{}
        \textint{\pp^{-1}\{a\} \cap R_A(t) \cap \widetilde{N}_{\vdim}(A)}{}
        g\ud \HM^0 \ud \HM^{\vdim}(a)
        \\
        = \textint{\pp \lIm \widetilde{N}_{\vdim}(A) \rIm}{}
        \textint{\pp^{-1}\{a\} \cap R_A(t)}{}
        g \ud \HM^0 \ud \HM^{\vdim}(a) \\
        = \textint{\Omega \cap \pp \lIm N(A) \rIm}{}
        \textint{\pp^{-1}\{a\} \cap R_A(t)}{}
        g \ud \HM^0 \ud \HM^{\vdim}(a)
        \quad \text{for $0 <  t < \infty$} \,.
    \end{multline}
    Using twice monotone convergence theorem, we deduce that
    \begin{multline}
        \textint{\widetilde{N}_{\vdim}(A)|\Omega}{}
        J_A(a,\nu)\, g(a, \nu) \ud \HM^{\vdim}(a,\nu)
        \\
        = \lim_{t \to 0^{+}}\textint{\widetilde{N}_{\vdim}(A) \cap R_A(t)}{}
        J_A(a,\nu)\, g(a, \nu) \ud \HM^{\vdim}(a,\nu)
        \\
        = \lim_{t \to 0^{+}}  \textint{\pp \lIm N(A)|\Omega \rIm}{}
        \textint{\pp^{-1}\{a\} \cap R_A(t)}{}
        g \ud \HM^0 \ud \HM^{\vdim}(a)
        \\
        = \textint{\pp \lIm N(A)|\Omega \rIm}{} \textint{N(A,a)}{}
        g(a, \nu) \ud \HM^0(\nu) \ud \HM^{\vdim}(a) \,.
    \end{multline}
    In particular, \emph{if $ B \subseteq \Omega $ is compact and
      $ \HM^{\vdim}(\pp \lIm N(A) \rIm \cap B) < \infty $}, we set $ g = \CF{B} \circ \pp$ and
    recall \cite[Lemma 3.25(c)]{HugSantilli} to conclude that
    \begin{multline}
        \textint{\widetilde{N}_{\vdim}(A)| B}{} J_A(a,\nu) \ud \HM^{\vdim}(a,\nu)
        = \textint{\pp \lIm N(A) \rIm \cap B}{} 
        \HM^0(N(A,a)) \ud \HM^{\vdim}(a)
        \\
        = \HM^{\vdim}\bigl( B \cap \{a : \HM^0(N(A,a)) =1\}\bigr)
        \\
        + 2\, \HM^{\vdim}\bigl(B \cap \{a : \HM^0(N(A,a)) = 2\}\bigr) < \infty.
    \end{multline}
\end{remark}

\begin{theorem}
    \label{lem:minkowski_content_of_nh_sets}
    Suppose $0 < h < \infty$, $\Omega \subseteq \R^{\adim}$ is open,  $A$ is an
    $(\vdim,h)$~subset of~$\Omega$ with respect to $ \phi $ such that
    \begin{gather}
        \label{lem:minkowski_content_of_nh_sets hypothesis}
        \LM^{\adim}(A) = 0
        \quad \text{and} \quad
        \HM^{\vdim} \restrict \pp[N(A)]\ \text{ is a Radon measure over $ \Omega $} \,,
        \\
        \text{and set} \quad
        S = A \cap \big\{ a :
        \exists \, \nu \in \sphere{\vdim} \, \textrm{s.t.} \,
        N(A,a) = \{ \nu,\, -\nu \} \big\} \,.
    \end{gather} 

    Then $ \HM^{\vdim} \restrict A $ is a Radon measure and $ \HM^\vdim(A \without S) =0 $. In particular, $ A $ is $(\HM^n,n)$-rectifiable of class $ \cnt{2} $.
\end{theorem}

\begin{proof}
         Define
        \begin{gather}
                \rho(a,\nu,r) = \inf\{r ,\, \ar{A}{}(a, \nu)\}
                \quad \text{for $(a,\nu) \in N(A)$ and $0 < r  < \infty$} \,,
                \\
                T = A \cap \bigl\{ a : \HM^{0}(N(A,a)) = 1 \bigr\} \,, \\
                 \textrm{and} \quad     f(a,\nu,s) = \textprod{j=1}{\vdim} \bigl( 1 + s \kappa_{A,j}(a,\nu) \bigr)
                \quad \textrm{for $(a,\nu, s) \in \widetilde{N}_{\vdim}(A) \times \R $}\,,
        \end{gather}
and notice that $ S =  A \cap \bigl\{ a : \HM^{0}(N(A,a)) = 2 \bigr\} $.
Suppose $ K \subseteq A $ is a compact set and,  for $ r > 0 $, define
    \begin{displaymath}
        K_r = K + \oball 0r = \R^{\adim} \cap \bigl\{x : \da{K}{}(x) < r \bigr\}
    \end{displaymath}
    and set
    \begin{displaymath}
        r_0 = \tfrac 12 \inf \bigl\{ |x-y| : x \in K ,\, y \in \R^{\adim} \without \Omega \bigr\} \,.
    \end{displaymath}
    Clearly, $ r_0 > 0 $.   Recall, by Lemma~\ref{lem:one_sided_estimates}, that
    \begin{equation}\label{eq:Lusin}
        \HM^{\vdim}\bigl( (N(A)|\Omega) \without \widetilde{N}_{\vdim}(A)\bigr) = 0 \,.
    \end{equation}  
    and deduce from \eqref{eq:Lusin}, from the estimates in Lemma~\ref{lem:one_sided_estimates}, and
    from~\cite[Remark~3.8]{HugSantilli} that there exists $0 < \Delta < \infty$ such that
    \begin{multline}
        \label{eq:bound_on_f}
        0 < f(a, \nu, t) \leq \Delta \bigl( 1 + t \bigr)^{\vdim}
        \\
        \text{for $ \HM^{\vdim} $ almost all $(a,\nu) \in N(A)| \Omega $ and all $0 < t < \ar{A}{}(a,\nu)$} \,.
    \end{multline}
    This is a crucial observation that allows to employ the Lebesgue dominated convergence theorem
    later on and exhibits the subtlety of dealing with~$(\vdim,h)$~sets.

    % Employing~\cite[Lemma~3.25(c)(d)]{HugSantilli} we see that
    % \begin{displaymath}
    %     \HM^{\vdim}( \pp \lIm N(A)|K \rIm \without (M_1 \cup M_2) ) = 0 \,.
    % \end{displaymath}
    Note that $ \CF{K_r}(a + t \nu) = 0 $ whenever
    $ (a, \nu) \in N(A)| (\R^{\adim} \without \Omega) $ and $ 0 < t < r < r_0 $. Therefore,
    using~$ \LM^{\adim}(A) = 0 $, \cite[Corollary~3.18]{HugSantilli}, and~\eqref{eq:Lusin} we get
    for $ 0 < r < r_0 $
    % and $ t = \da{A}{}(a + t \nu) \leq \da{K}{}(a + t\nu) $ whenever$ (a, \nu) \in N(A) $ and $ 0 < t < \ar{A}{}(a, \nu) $, 
    \begin{multline}
        \LM^{\adim}(K_r) = \LM^{\adim}(K_r \without A)
        \\
        = \textsum{d=0}{\vdim} \textint{\widetilde{N}_d(A)}{}
        J_A(a, \nu) \textint{0}{\rho(a,\nu,r)} \CF{K_r}(a + t \nu)\, t^{\vdim-d} f(a, \nu, t)
        \ud \LM^{1}(t) \ud \HM^{\vdim}(a, \nu)
        \\
        = \textint{\widetilde{N}_{\vdim}(A)|\Omega}{}
        J_A(a, \nu) \textint{0}{\rho(a,\nu,r)}
        \CF{K_r}(a + t \nu)\, f(a, \nu, t) \ud \LM^{1}(t) \ud \HM^{\vdim}(a, \nu) \\
        = \textint{N(A)|\Omega}{}
        J_A(a, \nu) \textint{0}{\rho(a,\nu,r)}
        \CF{K_r}(a + t \nu)\, f(a, \nu, t) \ud \LM^{1}(t) \ud \HM^{\vdim}(a, \nu) \,.
    \end{multline}
    We compute the pointwise limit of the inner integral as a function over
    $\widetilde{N}_{\vdim}(A)|\Omega$. Noting that $ \CF{K_r}(a + t \nu) = 1 $ for
    $ (a, \nu) \in N(A)|K $ and $ 0 < t < \rho(a,\nu,r) $, we~obtain
    \begin{displaymath}
        \lim_{r \to 0} \tfrac{1}{2r} \textint{0}{\rho(a,\nu,r)}
        \CF{K_r}(a + t\nu) \, f(a,\nu,t) \ud \LM^{1}(t)
        = \tfrac 12
        \quad
        \text{for $ (a, \nu) \in \widetilde{N}_{\vdim}(A)|K $} \,.
    \end{displaymath}
    On the other hand,  
    \begin{gather}\label{eq1}
        \text{if $(a, \nu) \in N(A)| (A \without K)$ and $0 < t < r \leq \tfrac{1}{2}\da{K}{}(a)$,
          then $ \CF{K_r}(a + t \nu) =0 $} \,;
    \end{gather}
    hence, 
    \begin{displaymath}
        \lim_{r \to 0} \tfrac{1}{2r} \textint{0}{\rho(a,\nu,r)}
        \CF{K_r}(a + t\nu) \, f(a,\nu,t)\ud \LM^{1}(t) = 0
        \quad
        \text{for $ (a, \nu) \in \widetilde{N}_{\vdim}(A)| (\Omega \without K) $} \,.
    \end{displaymath}
    Recalling~\eqref{eq:bound_on_f} we get for $ \HM^{\vdim} $ almost all
    $(a, \nu)\in N(A)| \Omega $
    \begin{multline}
        \textint{0}{\rho(a,\nu,r)} f(a, \nu,t) \ud \LM^{1}(t)
        \leq \textint{0}{\rho(a,\nu,r)} \Delta \bigl( 1 + t \bigr)^{\vdim} \ud \LM^{1}(t)
        \\
        = \tfrac{\Delta}{\adim} \bigl( \bigl( 1 + \rho(a,\nu,r) \bigr)^{\adim} - 1 \bigr)
        \le \tfrac{\Delta}{\adim} \bigl( \bigl( 1 + r \bigr)^{\adim} - 1 \bigr) \,.
    \end{multline}
    Employing Remark~\ref{rem:coarea_formula_exhaust} and our
    hypothesis~\eqref{lem:minkowski_content_of_nh_sets hypothesis} yields
    \begin{displaymath}
        \textint{N(A)|B}{} J_A \ud \HM^{\vdim}  < \infty
        \quad \text{whenever $ B \subseteq \Omega $ is compact} \,.
    \end{displaymath}
    Therefore, we may apply the dominated convergence theorem and use again
    Remark~\ref{rem:coarea_formula_exhaust} to~get
    \begin{multline}
        \lim_{r \to 0^{+}} \tfrac{1}{2r} \textint{N(A)|K}{}
        J_A(a, \nu) \textint{0}{\rho(a,\nu,r)} \CF{K_r}(a + t \nu)\, f(a, \nu, t)
        \ud \LM^{1}(t)  \ud \HM^{\vdim}
        \\
        = \tfrac 12 \textint{N(A)|K}{} J_A(a, \nu) \ud \HM^{\vdim}(a, \nu) \\
        = \HM^{\vdim}( K \cap S ) + \tfrac 12 \HM^{\vdim}(K \cap T ) 
        % \leq \tfrac 12 \HM^{\vdim}(K)
    \end{multline} 
    and
    \begin{multline}
        \limsup_{r \to 0^{+}} \tfrac{1}{2r}
        \textint{N(A)|(\Omega \without K)}{}
        J_A(a, \nu) \textint{0}{\rho(a,\nu,r)} \CF{K_r}(a + t \nu)\, f(a, \nu, t)
        \ud \LM^{1}(t)  \ud \HM^{\vdim}
        \\
        = \limsup_{r \to 0^{+}} \tfrac{1}{2r}
        \textint{N(A)|(K_\varepsilon \without K)}{}
        J_A(a, \nu) \textint{0}{\rho(a,\nu,r)} \CF{K_r}(a + t \nu)\, f(a, \nu, t)
        \ud \LM^{1}(t)  \ud \HM^{\vdim}
        =0
    \end{multline}
    for any $ 0 < \varepsilon <  2r_0  $. Clearly, $M_1,M_2 \subseteq \pp \lIm N(A) \rIm$, so we deduce that
    \begin{multline}
        \label{eq:minkowski_content_for_nh_sets} \noeqref{eq:minkowski_content_for_nh_sets} % SK: Please do not remove this. I want to have this equation numbered.
        \lim_{r \to 0^{+}} \tfrac{1}{2r} \LM^{\adim}(K_r)
        % \textint{N(A)|\Omega}{}
        % J_A(a, \nu) \textint{0}{\rho(a,\nu,r)} \CF{K_r}(a + t \nu)\, f(a, \nu, t)
        % \ud \LM^{1}(t) \ud \HM^{\vdim}(a, \nu)
        \\
        = \lim_{r \to 0^{+}} \tfrac{1}{2r} \textint{N(A)|K}{}
        J_A(a, \nu) \textint{0}{\rho(a,\nu,r)} \CF{K_r}(a + t \nu)\, f(a, \nu, t)
        \ud \LM^{1}(t) \ud \HM^{\vdim}(a, \nu)
        \\
        + \lim_{r \to 0^{+}} \tfrac{1}{2r} \textint{N(A)|(\Omega \without K)}{}
        J_A(a, \nu) \textint{0}{\rho(a,\nu,r)} \CF{K_r}(a + t \nu)\, f(a, \nu, t)
        \ud \LM^{1}(t) \ud \HM^{\vdim}(a, \nu)
        \\
        = \HM^{\vdim}( K \cap S ) + \tfrac 12 \HM^{\vdim}(K \cap T ) < \infty  \,.
    \end{multline}

  Since the (lower) Minkowski content can always be bounded from below, up to a~dimensional
      constant, by the Hausdorff measure (cf.\ \cite[pag.\ 79]{Mattila1995}), we see\footnote{In
        this paper the Hausdorff measure is defined as in \cite[2.10.2]{Federer1969}, which differs
        from the Hausdorff measure defined in \cite{Mattila1995} by a dimensional constant.} that
      \begin{displaymath}
          \tfrac{\unitmeasure{\adim}}{2^{n+1}\, \unitmeasure{\vdim}}\, \HM^{\vdim}(K)
          \leq \lim_{r \to 0^{+}} \tfrac{1}{2r} \LM^{\adim}(K_r)
          = \HM^{\vdim}( K \cap S ) + \tfrac 12 \HM^{\vdim}(K \cap T) < \infty
      \end{displaymath}
      for every compact set $ K \subseteq A $, henceforth $ \HM^\vdim \restrict A $ is a~Radon
      measure. In particular, $ \HM^\vdim(K) =0 $ whenever $ K \subseteq A \without (S \cup T) $ is
      compact, so we conclude from \cite[2.2.5]{Federer1969} that
      \begin{displaymath}
          \HM^\vdim(A \without(S \cup T)) =0 \,.
      \end{displaymath}
      Clearly $ T \subseteq \pp \lIm N(A) \rIm$ is countably $(\HM^{\vdim},\vdim)$~rectifiable.
      If~$ K \subseteq T $ is a compact set, then $ K $ is $(\HM^{\vdim}, \vdim) $-rectifiable and,
      employing the $ \vdim $-dimensional Gross measure (cf. \cite[2.10.4]{Federer1969}) and
      \cite[3.2.26, 3.2.37]{Federer1969}, we see that
      \begin{displaymath}
          \tfrac 12 \HM^{\vdim}(K) 
          = \liminf_{r \to 0^{+}} \tfrac{1}{2r} \LM^{\adim}(K_r)  
          \geq \GM^{\vdim}(K)   = \HM^{\vdim}(K) \,;
      \end{displaymath} 
      thus $ \HM^{\vdim}(K) =0 $, and we conclude that $ \HM^{\vdim}(T) =0 $ again by
      \cite[2.2.5]{Federer1969}. It follows that $ \HM^n(A \setminus S) =0 $ and $ A $ is $(\HM^n,n) $-rectifiable of class $ \cnt{2} $ by \cite{Menne2019a}.
\end{proof}

\begin{remark}
    \label{main_rectifiability proof} Theorem \ref{main:rectifiability} readily follows from Theorem
    \ref{lem:minkowski_content_of_nh_sets} since, by
    Lemma~\ref{lem:support_of_a_varifold_is_nh_set}, we see that $ \spt \| V \| $ is
    a~$(\vdim, h)$~subset of $ \Omega $ with respect to~$ \phi $.
\end{remark}

\begin{remark}\label{accumulating parallel lines}
    The following example shows the necessity of the hypothesis
    \ref{lem:minkowski_content_of_nh_sets}\eqref{lem:minkowski_content_of_nh_sets
      hypothesis}. Suppose $ \{ a_j : j \in \natp \} $ are decreasing positive numbers converging to
    $ 0 $ and set
    \begin{displaymath}
        A = \bigl( \{0\} \times \R \bigr)
        \cup \tbcup_{j=1}^{\infty} \bigl( \{a_j\} \times \R \bigr)
        \cup \bigl( \{-a_j\} \times \R \bigr) \,.
    \end{displaymath}
    Then $ A $ is a $(1,0)$-set of $ \R^2 $ with respect to the Euclidean norm; indeed
    (cf.~Lemma~\ref{lem:support_of_a_varifold_is_nh_set}) $ A $ is the support of the varifold
    \begin{displaymath}
        V = \textsum{j=1}{\infty} \tfrac{1}{2^j} \bigl(
        \var{1}(\{a_j\} \times \R) + \var{1}(\{-a_j\} \times \R)
        \bigr) \in \Var{1}(\R^2) \,,
    \end{displaymath}
    and $ \delta V =0 $. Clearly, $ \HM^1 \restrict \pp[N(A)] $ is \emph{not} a Radon measure over
    $ \R^2 $ and for each $ x \in \{0\} \times \R $ there exists no $ \nu \in \sphere{1} $ such that
    $ \da{A}{}(x + s \nu) = s $ for some $ s > 0 $.
\end{remark}

\section{Regularity}
\label{sec:regularity}

In this section we combine Theorem \ref{main:rectifiability} with Allard's regularity theorem
\cite[3.6]{Allard1986} to prove Theorem~\ref{main:regularity}.

\nsubsection{Proof of Theorem \ref{main:regularity}} Define $0 < \delta < 1$ as the number whose existence is guaranteed
by~\textsc{The~regularity theorem} of~\cite[p.~27]{Allard1986} with
\begin{align}
        &0, &&\tfrac 32 \unitmeasure{\vdim}, &&\tfrac 12, &&\tfrac 12, &&c_0(\phi), &&c_1(\phi),
        &&\gamma(\phi), &&c_2(\phi), &&c_3(\phi), &&\alpha
        \\
        \text{in place of} \quad
        &A, &&M, &&\lambda, &&\varepsilon, &&M_{0,0}, &&M_{0,1}
        &&\gamma, &&N_{0,2}, &&N_{0,3}, &&\alpha \,.
\end{align}
Define $ A = \spt \| V \| $ and
\begin{displaymath}
        S = \bigl\{
        x \in A
        : \exists \, r > 0 \ 
        \exists \, \nu \in \sphere{\vdim} \quad
        \oball{a+r\nu}{r} \cap A = \varnothing = \oball{a-r\nu}{r} \cap A
        \bigr\} \,.
\end{displaymath}
Since $ V \in \IVar{\vdim}(\Omega) $, we see by \cite[3.5]{Allard1972} that $ \| V \| = \theta \cdot \HM^\vdim \restrict W$, where
$ W\subseteq \Omega $ is a countably $(\HM^\vdim,\vdim)$-rectifiable set and $ \theta $ is a
$ \HM^\vdim \restrict W$-summable $\natp $-valued function. It follows from the absolute continuity hypothesis that $ \HM^\vdim(A \without W) =0 $, hence $ A $ is countably $(\HM^{\vdim}, \vdim)$-rectifiable
and $\HM^{\vdim} \restrict A \leq \| V \| $. In particular, $ \HM^\vdim \restrict A $ is a Radon measure
over~$ \Omega $ and we may apply Theorem \ref{main:rectifiability} to infer that
\begin{equation}
    \label{main:rectifiability_eq2}
    \HM^{\vdim} \bigl( A \without S \bigr) = 0 \,.
\end{equation}
%In particular, $ A $ is countably $(\HM^{\vdim},\vdim)$-rectifiable of class $ \cnt{2} $ by Remark
%\ref{rem:phi_normal_bundle_in_terms_of_Euclidean_one}.

We fix $ a \in S $ and $\nu \in \sphere{\vdim}$ such that $ \density^{\vdim}(\| V \|,a) = 1$ and
$N(A,a) = \{ \nu, -\nu \}$. Let $T \in \grass{\adim}{\vdim}$ satisfy $T \nu = 0$. Without loss of
generality we assume $ a =0 $. Notice that there exist and $0 < s < \infty$ such that 
\begin{gather}
        \bigl\{ x : |T(x)| \leq s ,\, |T^\perp(x)| \leq 2s \bigr\} \subseteq \Omega
        \\
        \text{and} \quad
        \label{eq:regularity:touching_with_balls}
        A \cap \oball{s \nu}{s}
        = A \cap \oball{- s \nu}{s} = \varnothing \,.
\end{gather}
We define $ Z = \bigl\{ x : | T(x)| < s/2, \; | T^\perp(x)| < 5s/2 \bigr\}$ and
$ Q = \{x : | T^\perp(x)| \leq s\} $ and set
\begin{displaymath}
        \widetilde{V} = V \restrict Z  \times \grass{\adim}{\vdim} \in \IVar{\vdim}(Z) \,.
\end{displaymath}
Notice that $ Q \cap T^{-1}(\oball{0}{s/2}) \subseteq Z $ and,
employing~\eqref{eq:regularity:touching_with_balls}, we observe that
\begin{displaymath}
        \spt \| \widetilde{V} \| \cap T^{-1} \lIm \oball 0r \rIm
        \subseteq \bigl\{ x : | T(x)| < r, \; |T^\perp(x)| < r^2/s \bigr\}
        \subseteq Q
        \quad \text{for $ 0 < r \leq \tfrac s2 $} \,.
\end{displaymath}
Since $ \density^{\vdim}(\| \widetilde{V} \|,0) = 1 $, we can choose $ 0 < r < \frac{s}{2}  $ so that 
\begin{gather}
        Hr \leq \delta, \quad
        r^2 \leq \delta \, 2s^2 / ( 3\unitmeasure{\vdim} ) \,,
        \\
        \text{and} \quad
        \tfrac 12\, \unitmeasure{\vdim} \rho^{\vdim}
        \leq \|\widetilde{V} \|\bigl(Z \cap T^{-1} \lIm \oball 0{\rho} \rIm \bigr)
        \leq \tfrac 32\, \unitmeasure{\vdim} \rho^{\vdim}
        \quad \text{for $ 0 < \rho  \leq r $} \,.
\end{gather}
In particular, we notice that 
\begin{displaymath}
        r^{-\vdim-2} \textint{T^{-1}\lIm \oball 0r \rIm}{} |T^\perp(z)|^2 \ud \| \widetilde{V} \|(z)
        \leq \delta \,.
\end{displaymath}
We conclude both \cite[2.2(1)-(8)]{Allard1986} and \cite[3.6(1)-(5)]{Allard1986} are satisfied and
we infer from the regularity theorem \cite[pag.\ 27]{Allard1986} that
$ \spt \| \widetilde{V} \| \cap T^{-1} \lIm \oball{0}{r/2} \rIm $ is the graph of a
$ \cnt{1, \alpha} $-function.

\subsection*{Acknowledgements}
Part of this work was done while the first author was hosted by the second author at University of
L'Aquila. The~research of S.\,K.\ was financed by the \href{https://ncn.gov.pl/}{National Science
  Centre Poland} grant number 2022/46/E/ST1/00328.
\\ The research of M.\ S.\ is partially supported by INDAM-GNSAGA and PRIN project 20225J97H5.

\bigskip

{\small \noindent
  Sławomir Kolasiński \\
  Uniwersytet Warszawski, Instytut Matematyki \\
  ul. Banacha 2, 02-097 Warszawa, Poland \\
  \texttt{s.kolasinski@mimuw.edu.pl}
}

\bigskip

{\small \noindent
  Mario Santilli \\
  Department of Information Engineering, Computer Science and Mathematics,\\
  Università degli Studi dell'Aquila\\
  via Vetoio 1, 67100 L’Aquila, Italy\\
  \texttt{mario.santilli@univaq.it}
}

\end{document}